\newtheorem{thm}{Theorem}[section]
\newtheorem*{mthm}{Main Theorem}
\newtheorem{cor}[thm]{Corollary}
\newtheorem{lem}[thm]{Lemma}
\newtheorem{prop}[thm]{Proposition}
\theoremstyle{definition}
\newtheorem{remark}[thm]{Remark}
\renewcommand{\epsilon}{\varepsilon}
\renewcommand{\phi}{\varphi}
\newcommand{\defeq}{\mathrel{\mathop:}=}
\renewcommand{\Re}{\operatorname{Re}}
\DeclareMathOperator{\Mat}{M}
\DeclareMathOperator{\lat}{L}
\DeclareMathOperator{\aff}{R}
\DeclareMathOperator{\vN}{N}
\DeclareMathOperator{\Pro}{P}
\DeclareMathOperator{\B}{B}
\DeclareMathOperator{\U}{U}
\DeclareMathOperator{\GL}{GL}
\DeclareMathOperator{\UCB}{UCB}
\DeclareMathOperator{\N}{\mathbb{N}}
\DeclareMathOperator{\Z}{\mathbb{Z}}
\DeclareMathOperator{\Q}{\mathbb{Q}}
\DeclareMathOperator{\R}{\mathbb{R}}
\DeclareMathOperator{\C}{\mathbb{C}}
\DeclareMathOperator{\Prob}{Prob}
\DeclareMathOperator{\Sph}{\mathbb{S}}
\DeclareMathOperator{\rk}{rk}
\DeclareMathOperator{\rank}{rank}
\DeclareMathOperator{\tr}{tr}
\DeclareMathOperator{\Sym}{Sym}
\DeclareMathOperator{\Cent}{C}
\DeclareMathOperator{\free}{F}
\begin{document}

\setlist{noitemsep}

\author{Friedrich Martin Schneider}
\address{F.M.S., Institute of Discrete Mathematics and Algebra, TU Bergakademie Freiberg, 09596 Freiberg, Germany}
\email{martin.schneider@math.tu-freiberg.de}

\title[Inner amenability and continuous rings]{Group von Neumann algebras, inner amenability, \\ and unit groups of continuous rings}
\date{\today}

\begin{abstract} 
	We prove that, if a discrete group $G$ is not inner amenable, then the unit group of the ring of operators affiliated with the group von Neumann algebra of $G$ is non-amenable with respect to the topology generated by its rank metric. This provides examples of non-discrete irreducible, continuous rings (in von Neumann's sense) whose unit groups are non-amenable with regard to the rank topology. Our argument establishes and uses connections with Eymard--Greenleaf amenability of the action of the unitary group of a $\mathrm{II}_{1}$ factor on the associated space of projections of a fixed trace.
\end{abstract}

\subjclass[2020]{43A07, 22D25, 06C20, 16E50}


\maketitle


\section{Introduction}

In a seminal work~\cite{VonNeumannBook}, von Neumann discovered a continuous analogue of finite-dimensional projective geometry. \emph{Continuous geometries}, i.e., complete, complemented, modular lattices whose algebraic operations possess certain natural continuity properties, are the central objects of this theory. A cornerstone in von Neumann's study is his \emph{coordinatization theorem}~\cite{VonNeumannBook}, which states that, firstly, the set $\lat(R)$ of all principal right ideals of every \emph{regular} ring $R$, ordered by set-theoretic inclusion, constitutes a complemented, modular lattice, and secondly, every complemented, modular lattice of an order at least four arises in this way from an up to isomorphism unique regular ring. A \emph{continuous} ring is a regular ring $R$ whose corresponding lattice $\lat(R)$ is a continuous geometry. Building on a dimension theory for (directly) irreducible continuous geometries, another profound achievement of~\cite{VonNeumannBook}, von Neumann proved that an irreducible, regular ring $R$ is continuous if and only if there exists a (necessarily unique) rank function $\rk \colon R \to [0,1]$, and that in such case $R$ is complete with respect to the induced \emph{rank metric} $R \times R \to [0,1], \, (a,b) \mapsto \rk(a-b)$. Thus, any irreducible, continuous ring $R$ admits a natural topology---the \emph{rank topology} generated by its rank metric---which turns $R$ into a topological ring.

While the discrete irreducible, continuous rings are precisely the ones isomorphic to a matrix ring $\Mat_{n}(D)$ for some division ring $D$ and some positive integer $n$ (see Remark~\ref{remark:matrix.rings}), the class of \emph{non-discrete} irreducible, continuous rings appears intriguingly vast. The initial example of an irreducible continuous geometry is the projection lattice of an arbitrary von Neumann factor $M$ of type $\mathrm{II}_{1}$, in which case the corresponding irreducible, continuous ring is non-discrete and can be described as the algebra $\aff(M)$ of densely defined, closed, linear operators \emph{affiliated with $M$}~\cite{MurrayVonNeumann}. For another example, given a division ring $D$, one may consider the inductive limit $\varinjlim\limits \Mat_{2^{n}}(D)$ of matrix rings \begin{displaymath}
	D \, \cong \, \Mat_{2^{0}}(D) \, \stackrel{\iota_{0}}{\longrightarrow} \, \ldots \, \stackrel{\iota_{n-1}}{\longrightarrow} \, \Mat_{2^{n}}(D) \, \stackrel{\iota_{n}}{\longrightarrow} \, \Mat_{2^{n+1}}(D) \, \stackrel{\iota_{n+1}}{\longrightarrow} \, \ldots
\end{displaymath} along the embeddings \begin{displaymath}
	\iota_{n} \colon \, \Mat_{2^{n}}(D) \, \longrightarrow \, \Mat_{2^{n+1}}(D), \quad a \, \longmapsto \, \begin{pmatrix} a & 0 \\ 0 & a \end{pmatrix} \qquad (n \in \N) .
\end{displaymath} Since the maps $(\iota_{n})_{n \in \N}$ are isometric with respect to the normalized rank\footnote{See~\cite[I.10.12, p.~359--360]{bourbaki} for details concerning the rank of matrices over division rings.} metrics \begin{displaymath}
	d_{n} \colon \, \Mat_{2^{n}}(D) \times \Mat_{2^{n}}(D) \, \longrightarrow \, [0,1], \quad (a,b) \, \longmapsto \, \tfrac{\rank(a-b)}{2^{n}} \qquad (n \in \N) ,
\end{displaymath} those metrics admit a joint extension to $\varinjlim\limits \Mat_{2^{n}}(D)$. The completion $\Mat_{\infty}(D)$ of $\varinjlim\limits \Mat_{2^{n}}(D)$ with respect to the resulting metric constitutes a non-discrete irreducible, continuous ring~\cite{NeumannExamples,HalperinInductive}. An abstract characterization of continuous rings arising in this manner can be found in~\cite{AraClaramunt18}.

There has been recent interest in concrete occurrences of continuous rings, for instance, in the context of with Kaplansky's direct finiteness conjecture~\cite{ElekSzabo,linnell} and the Atiyah conjecture~\cite{LinnellSchick,elek}. The present note is concerned with topological dynamics of the unit group $\GL(R)$ of an irreducible, continuous ring $R$, equipped with the relative rank topology. In~\cite{CarderiThom}, Carderi and Thom showed that, if $F$ is a finite field, then the topological group $\GL(\Mat_{\infty}(F))$ is \emph{extremely amenable}, i.e., every continuous action of $\GL(\Mat_{\infty}(F))$ on a non-void compact Hausdorff space has a fixed point. By work of the present author~\cite[Cor.~1.6]{FMS22}, for every non-discrete irreducible, continuous ring $R$, the union of extremely amenable topological subgroups of $\GL(R)$ is dense in $\GL(R)$. This illustrates that the phenomenon of extreme amenability is---to some extent---inherent to topological unit groups of non-discrete irreducible, continuous rings. On the other hand, by a well-known consequence of the ping-pong lemma, for every division ring $D$ of characteristic zero and every natural number $n \geq 2$, the unit group of the discrete irreducible, continuous ring $\Mat_{n}(D)$ is non-amenable, which raises the question as to whether there exist non-discrete irreducible, continuous rings with topologically non-amenable unit groups, too. This question is answered affirmatively by our main result, which concerns the ring of densely defined, closed, linear operators affiliated with the group von Neumann algebra $\vN(G)$ of a discrete group $G$.

\begin{mthm}[Corollary~\ref{corollary:main}] Let $G$ be a group that is not inner amenable.\footnote{Examples of such groups are given in Proposition~\ref{proposition:robin} and Theorem~\ref{theorem:haagerup.olesen}.} Then $\aff(\vN(G))$ is a non-discrete irreducible, continuous ring whose unit group is non-amenable with respect to the rank topology. \end{mthm}

The argument proving our main result proceeds via inspecting several isometric group actions for Eymard--Greenleaf amenability. More precisely, if $G$ is a non-inner amenable group and $t \in (0,1)$, then the natural action of $G$ on the space of projections of trace $t$ of the $\mathrm{II}_{1}$ factor $\vN(G)$, equipped with the trace metric, is not Eymard--Greenleaf amenable (Theorem~\ref{theorem:first}), which witnesses non-amenability of the topological group $\GL(\aff(\vN(G)))$, by virtue of a general mechanism comparing certain actions of $\GL(\aff(M))$ and the unitary group $\U(M) \leq \GL(\aff(M))$ for an arbitrary $\mathrm{II}_{1}$ factor $M$ (Lemma~\ref{lemma:amenable}).

This article is organized as follows. After recollecting some general background material on topological dynamics in Section~\ref{section:amenability}, we turn to continuous geometries and unit groups of their coordinate rings in Section~\ref{section:continuous.rings}. The subsequent Section~\ref{section:projections} contains a discussion of Eymard--Greenleaf amenability for actions of unitary groups of $\mathrm{II}_{1}$ factors on the associated projection spaces. In Section~\ref{section:groups}, we specify to group von Neumann algebras and connect our previous considerations with inner amenability of discrete groups, finishing the proof of our main result.

\section{Eymard--Greenleaf amenability}\label{section:amenability}

An action $G \curvearrowright (X,\mathscr{E})$ of a group $G$ by isomorphisms on a uniform space $(X,\mathscr{E})$ is said to be \emph{Eymard--Greenleaf amenable}\footnote{This term was coined by Pestov~\cite[Def.~3.5.9, p.~64]{PestovBook}, referencing works of Eymard~\cite{eymard} and Greenleaf~\cite{greenleaf}.} if the algebra \begin{displaymath}
	\UCB(X,\mathscr{E}) \, \defeq \, \{ f \in \ell^{\infty}(X,\R) \mid \forall \epsilon \in \R_{>0} \, \exists E \in \mathscr{E} \, \forall (x,y) \in E \colon \, \vert f(x) - f(y) \vert \leq \epsilon \}
\end{displaymath} of all uniformly continuous bounded real-valued functions on $(X,\mathscr{E})$ admits a $G$-invariant mean, i.e., a positive unital linear map \begin{displaymath}
\mu \colon \, \UCB(X,\mathscr{E}) \, \longrightarrow \, \R
\end{displaymath} such that \begin{displaymath}
	\forall g \in G \ \forall f \in \UCB(X,\mathscr{E}) \colon \quad \mu(f \circ \tilde{g}) \, = \, \mu(f) ,
\end{displaymath} where we let $\tilde{g} \colon X \to X, \, x \mapsto gx$ for each $g \in G$. In particular, this yields a concept of amenability for isometric group actions on metric spaces, where a metric space $(X,d)$ is being viewed as a uniform space carrying the induced uniformity \begin{displaymath}
	\{ E \subseteq X \times X \mid \exists r \in \R_{>0} \, \forall x,y \in X \colon \, d(x,y) < r \Longrightarrow (x,y) \in E \} .
\end{displaymath} Furthermore, Eymard--Greenleaf amenability naturally gives rise to a notion of amenability for topological groups. To be more precise, let $G$ be a topological group. Considering the neighborhood filter $\mathscr{U}(G)$ of the neutral element in $G$, one may endow $G$ with its \emph{right uniformity} \begin{displaymath}
	\mathscr{E}_{\Rsh}(G) \, \defeq \, \left\{ E \subseteq G \times G \left\vert \, \exists U \in \mathscr{U}(G) \, \forall x,y \in G \colon \, xy^{-1}\! \in U \Longrightarrow \, (x,y) \in E \right\} \right.\! .
\end{displaymath} The topological group $G$ is called \emph{amenable} if the action of the group $G$ by left translations on the uniform space $(G,\mathscr{E}_{\Rsh}(G))$ is Eymard--Greenleaf amenable. By a result of Rickert~\cite[Thm.~4.2]{rickert}, the topological group $G$ is amenable if and only if every continuous\footnote{Continuity of an action means \emph{joint} continuity.} action of $G$ on a non-void compact Hausdorff space admits an invariant regular Borel probability measure, or equivalently, if every continuous action of $G$ by affine homeomorphisms on a non-void compact convex subset of a locally convex topological vector space has a fixed point.

A topological group is said to have \emph{small invariant neighborhoods} if its neutral element admits a neighborhood basis consisting of conjugation-invariant subsets. The following is well known.

\begin{lem}\label{lemma:pestov} Let $G$ be an amenable topological group having small invariant neighborhoods. Then every continuous isometric action of $G$ on a non-empty metric space is Eymard--Greenleaf amenable. \end{lem}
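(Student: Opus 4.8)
The plan is to prove the contrapositive in spirit, or rather to exhibit an invariant mean directly by combining a $G$-invariant mean on the right uniformity of $G$ with a transport-of-structure argument via the orbit map. So let $G$ be an amenable topological group with small invariant neighborhoods (SIN), and let $G \curvearrowright (X,d)$ be a continuous isometric action on a nonempty metric space; fix a basepoint $x_{0} \in X$ and consider the orbit map $\pi \colon G \to X$, $\pi(g) = gx_{0}$. The key point is that, because the action is isometric and continuous, $\pi$ is uniformly continuous as a map from $(G,\mathscr{E}_{\Rsh}(G))$ to $(X,d)$: continuity at the identity gives, for each $\epsilon > 0$, a neighbourhood $U$ of $e$ with $d(ux_{0},x_{0}) < \epsilon$ for $u \in U$, and then $gh^{-1} \in U$ forces $d(gx_{0},hx_{0}) = d((gh^{-1})(hx_{0}),hx_{0})$; here I would use the SIN hypothesis to replace $U$ by a conjugation-invariant neighbourhood $V \subseteq U$, so that $d((gh^{-1})(hx_{0}), hx_{0})$ can be controlled uniformly in $h$ — this is the step that genuinely needs SIN, since without it the modulus of continuity of $g \mapsto gy$ depends on $y$. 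Consequently precomposition with $\pi$ sends $\UCB(X,d)$ into $\UCB(G,\mathscr{E}_{\Rsh}(G))$.

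Now the pullback $\pi^{*} \colon \UCB(X,d) \to \UCB(G,\mathscr{E}_{\Rsh}(G))$, $f \mapsto f \circ \pi$, is a positive unital linear map, and it is $G$-equivariant in the appropriate sense: for $g \in G$ and $f \in \UCB(X,d)$ one has $\pi^{*}(f \circ \tilde{g})(h) = f(g(hx_{0})) = f((gh)x_{0}) = (\pi^{*}f)(gh) = (\pi^{*}f)(L_{g}(h))$, where $L_{g}$ is left translation by $g$. Since $G$ is amenable, the action of $G$ by left translations on $(G,\mathscr{E}_{\Rsh}(G))$ is Eymard--Greenleaf amenable, so there is a left-translation-invariant mean $\nu$ on $\UCB(G,\mathscr{E}_{\Rsh}(G))$. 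Then $\mu \defeq \nu \circ \pi^{*}$ is a positive unital linear functional on $\UCB(X,d)$, and for any $g \in G$, $f \in \UCB(X,d)$ we get $\mu(f \circ \tilde{g}) = \nu(\pi^{*}(f \circ \tilde{g})) = \nu((\pi^{*}f) \circ L_{g}) = \nu(\pi^{*}f) = \mu(f)$, using left-invariance of $\nu$. Hence $\mu$ is a $G$-invariant mean on $\UCB(X,d)$, which is exactly what Eymard--Greenleaf amenability of $G \curvearrowright (X,d)$ asserts.

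The main obstacle is the uniform continuity of the orbit map, and it is precisely here that the two hypotheses — amenability and SIN — play distinct roles: amenability supplies the mean on the group, while SIN is what upgrades pointwise continuity of the action into uniform continuity of $\pi$ with respect to the right uniformity. I would take a little care to phrase the SIN argument correctly: given $\epsilon > 0$, pick a conjugation-invariant neighbourhood $V$ of $e$ with $d(vx_{0},x_{0}) < \epsilon$ for all $v \in V$; then if $gh^{-1} \in V$, writing $v \defeq gh^{-1}$ we have $d(gx_{0},hx_{0}) = d(vhx_{0}, hx_{0})$, and while $v$ need not lie in a neighbourhood adapted to the point $hx_{0}$, one instead observes $d(vhx_{0},hx_{0}) = d(h^{-1}vh x_{0}, x_{0}) < \epsilon$ because $h^{-1}vh \in V$ by conjugation-invariance and the action is by isometries — so the entry $(g,h)$ of the corresponding entourage of $\mathscr{E}_{\Rsh}(G)$ maps into the $\epsilon$-entourage of $d$. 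Everything else is the formal bookkeeping of transporting a mean along an equivariant positive unital map, which is routine.
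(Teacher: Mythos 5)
Your proof is correct and follows essentially the same route as the paper: the paper also pulls back a left-invariant mean on $\UCB(G,\mathscr{E}_{\Rsh}(G))$ along the orbit map $g \mapsto gx_{0}$, citing Pestov and Juschenko--Schneider for the fact that SIN makes $f \mapsto f(\,\cdot\,x_{0})$ a well-defined, unital, positive, equivariant operator into $\UCB(G,\mathscr{E}_{\Rsh}(G))$. Your explicit conjugation-invariance computation $d(vhx_{0},hx_{0}) = d(h^{-1}vhx_{0},x_{0})$ is exactly the content of that cited step.
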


\begin{proof} Consider a continuous isometric action of $G$ on a non-empty metric space $X$. Pick any $x \in X$. Since $G$ has small invariant neighborhoods, \begin{displaymath}
		\UCB(X) \, \longrightarrow \, \UCB(G,\mathscr{E}_{\Rsh}(G)), \quad f \, \longmapsto \, (g \mapsto f(gx))
\end{displaymath} constitutes a well-defined operator, which is moreover unital, positive, and $G$-equivariant with respect to the left-translation action on $G$ (for details, see~\cite[Lem.~3.6.5, p.~71]{PestovBook} or~\cite[Prop.~3.9]{JuschenkoSchneider}). Thus, via composition with this operator, any $G$-left-invariant mean on $\UCB(G,\mathscr{E}_{\Rsh}(G))$ gives rise to a $G$-invariant mean on $\UCB(X)$. \end{proof}

\section{Continuous rings and their unit groups}\label{section:continuous.rings}

We recollect some elements of von Neumann's continuous geometry~\cite{VonNeumannBook}. By a \emph{lattice} we mean a partially ordered set $L$ in which every pair of elements $x,y \in L$ admits both a (necessarily unique) supremum $x\vee y \in L$ and a (necessarily unique) infimum $x\wedge y \in L$. A \emph{complete lattice} is a partially ordered set $L$ such that every subset $S \subseteq L$ has a (necessarily unique) supremum $\bigvee S \in L$. If $L$ is a complete lattice, then every $S \subseteq L$ admits a (necessarily unique) infimum $\bigwedge S \in L$, too. A lattice $L$ is called \emph{bounded} if it has both a (necessarily unique) greatest element $1 = 1_{L} \in L$ and a (necessarily unique) least element $0 = 0_{L} \in L$. Clearly, any complete lattice is bounded. A lattice $L$ is said to be \emph{(directly) irreducible} if $\vert L \vert \geq 2$ and $L$ is not isomorphic to a direct product of two lattices of cardinality at least two. A \emph{continuous geometry} is a complete lattice $L$ such that \begin{itemize}
	\item[---] $L$ is \emph{complemented}, i.e., \begin{displaymath}
					\qquad \forall x \in L \ \exists y \in L \colon \quad x \vee y = 1, \ \, x\wedge y = 0 ,
				\end{displaymath}
	\item[---] $L$ is \emph{modular}, i.e., \begin{displaymath}
					\qquad \forall x,y,z \in L \colon \quad x \leq y \ \, \Longrightarrow \ \, x \vee (y \wedge z) = y \wedge (x \vee z) ,
				\end{displaymath}
	\item[---] and, for every chain $C \subseteq L$ and every element $x \in L$, \begin{displaymath}
		\qquad x \wedge \bigvee C \, = \, \bigvee \{ x \wedge y \mid y \in C\}, \quad \, x \vee \bigwedge C \, = \, \bigwedge \{ x \vee y \mid y \in C\} .
	\end{displaymath}
\end{itemize} A \emph{dimension function} on a bounded lattice $L$ is a map $\Delta \colon L \to [0,1]$ such that \begin{itemize}
	\item[---] $\Delta (0_{L}) = 0$ and $\Delta (1_{L}) = 1$,
	\item[---] $\Delta (x\vee y) + \Delta (x \wedge y) \, = \, \Delta (x) + \Delta (y)$ for all $x,y \in L$,
	\item[---] $\Delta$ is \emph{strictly monotone}, i.e., \begin{displaymath}
		\qquad \forall x,y \in L \colon \quad x < y \ \, \Longrightarrow \ \, \Delta (x) < \Delta (y).
\end{displaymath} \end{itemize} If $\Delta \colon L \to [0,1]$ is a dimension function on a bounded lattice $L$, then \begin{displaymath}
	\delta_{\Delta} \colon \, L \times L \, \longrightarrow \, [0,1] , \quad (x,y) \, \longmapsto \, \Delta(x \vee y)-\Delta(x\wedge y)
\end{displaymath} is a metric on $L$ (see~\cite[V.7, Lem.~on p.~76]{BirkhoffBook} or~\cite[I.6, Satz~6.2, p.~46]{MaedaBook}). By work of von Neumann~\cite{VonNeumannBook}\footnote{Existence is due to~\cite[I.VI, Thm.~6.9, p.~52]{VonNeumannBook} (see also~\cite[V.2, Satz~2.1, p.~118]{MaedaBook}), uniqueness is due to~\cite[I.VII, Cor.~1 on p.~60]{VonNeumannBook} (see also~\cite[V.2, Satz~2.3, p.~120]{MaedaBook}).}, every irreducible continuous geometry $L$~admits a unique dimension function, which will be denoted by $\Delta_{L} \colon L \to [0,1]$. If $L$ is an irreducible continuous geometry, then we let $\delta_{L} \defeq \delta_{\Delta_{L}}$.

We proceed to some basic remarks concerning von Neumann's continuous rings~\cite{VonNeumannBook} (see also~\cite{MaedaBook,GoodearlBook}). A ring will be called \emph{(directly) irreducible} if it is non-zero and not isomorphic to a direct product of two non-zero rings. Given a unital ring $R$, we consider the set \begin{displaymath}
	\lat(R) \, \defeq \, \{ aR \mid a \in R \} ,
\end{displaymath} partially ordered by set-theoretic inclusion. A unital ring $R$ is called \emph{(von Neumann) regular} if \begin{displaymath}
	\forall a \in R \ \exists b \in R \colon \qquad aba \, = \, a .
\end{displaymath} Due to~\cite[II.II, Thm.~2.4, p.~72]{VonNeumannBook}, if $R$ is a regular ring, then the partially ordered set $\lat(R)$ is a complemented, modular lattice, in which \begin{displaymath}
	I\vee J \, = \, I+J, \quad I\wedge J \, = \, I\cap J \qquad (I,J \in \lat(R)) .
\end{displaymath}

\begin{thm}[von Neumann~\cite{VonNeumannBook}]\label{theorem:irreducibility} A regular ring $R$ is irreducible if and only if $\lat(R)$ is irreducible. \end{thm}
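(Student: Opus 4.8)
The plan is to establish both implications by relating direct product decompositions of $R$ to direct product decompositions of the lattice $\lat(R)$, using the ring-theoretic structure that regularity provides. Recall that for a unital ring, a decomposition $R \cong R_1 \times R_2$ into non-zero rings corresponds precisely to a non-trivial central idempotent $e \in R$ (with $R_1 \cong eRe = eR$ and $R_2 \cong (1-e)R$); and this is the bridge I would exploit in both directions.

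For the easier direction, suppose $R$ is \emph{not} irreducible, so there is a non-trivial central idempotent $e \in R$. Then I would check that $\lat(R)$ splits as the product of the intervals $[0, eR]$ and $[0, (1-e)R]$ in $\lat(R)$: every right ideal $aR$ decomposes as $aR = aeR \oplus a(1-e)R$ because $e$ is central, and the maps $I \mapsto (I \cap eR,\, I \cap (1-e)R)$ and $(I_1,I_2)\mapsto I_1 + I_2$ are mutually inverse lattice isomorphisms. Since $eR$ and $(1-e)R$ are principal right ideals of the regular ring $R$ (generated by idempotents), these intervals are themselves of the form $\lat(eRe)$ and $\lat((1-e)R(1-e))$, which have at least two elements as $e,1-e\neq 0$. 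Hence $\lat(R)$ is reducible. Equivalently: irreducibility of $\lat(R)$ forces irreducibility of $R$.

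For the converse, suppose $\lat(R)$ is \emph{not} irreducible, say $\lat(R) \cong L_1 \times L_2$ with both factors of cardinality $\geq 2$. The top element $1_R = R$ then corresponds to $(1_{L_1}, 1_{L_2})$, and the element $e_1 := (1_{L_1}, 0_{L_2})$ is a principal right ideal of $R$, so $e_1 = eR$ for some idempotent $e$ (every principal right ideal in a regular ring is generated by an idempotent, by~\cite[II.II, Thm.~2.4, p.~72]{VonNeumannBook}-type reasoning, or directly: if $aba = a$ then $ab$ is idempotent with $abR = aR$). Likewise the complement gives $(1-e')R$ for an idempotent $e'$; since $e_1$ and its complement join to $1_R$ and meet to $0$, one gets $eR \oplus (1-e')R = R$ and $eR \cap (1-e')R = 0$, so after adjusting we may take $e' = e$, i.e.\ $1-e$ generates the complementary ideal. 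The crux is then to show $e$ is \emph{central}. For this I would use modularity together with the fact that $eR$ and $(1-e)R$ are lattice-theoretic complements lying in the "center" of the lattice decomposition: an element of $\lat(R)$ is a direct factor of the lattice iff it is a \emph{neutral} complemented element (a central element of the lattice). I would show that for any $x \in R$, the right ideals $exR$ and $(1-e)xeR$ must vanish appropriately — concretely, exploiting that $xeR \subseteq ?$; the standard argument shows that a central idempotent of the lattice $\lat(R)$ (i.e.\ one inducing a lattice product decomposition) must be generated by a \emph{central} idempotent of $R$, because left multiplication by any $a\in R$ induces a map on $\lat(R)$ respecting the decomposition, forcing $aeR\subseteq eR$ and $a(1-e)R\subseteq(1-e)R$ for all $a$, which gives $ae = eae$ and $a(1-e)=(1-e)a(1-e)$, and combining these yields $ae = ea$. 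Then the non-trivial central idempotent $e$ witnesses reducibility of $R$.

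The main obstacle is the centrality argument in the converse direction: translating the purely lattice-theoretic hypothesis "$\lat(R)$ is a non-trivial product" into the ring-theoretic statement "there is a non-trivial central idempotent." The point is that a product decomposition of $\lat(R)$ is detected by a neutral element (one generating a distributive, complemented sublattice compatibly), and one must argue that the idempotent generating it is forced to commute with all of $R$ — the cleanest route is to observe that every $a \in R$ acts on $\lat(R)$ by $I \mapsto$ (something order-related to) $aI$ and that this action must preserve the two factors, which pins down $ae$ and $ea$. Von Neumann's coordinatization machinery guarantees this correspondence is tight; I would cite~\cite{VonNeumannBook} for the precise form of the idempotent–ideal dictionary and for the characterization of central (neutral) elements of $\lat(R)$, and otherwise the verification is a short diagram chase through the definitions of $\vee = +$ and $\wedge = \cap$ in $\lat(R)$.
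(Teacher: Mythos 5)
The paper does not actually prove this statement; it only cites von Neumann~\cite[II.II, Thm.~2.9, p.~76]{VonNeumannBook}, so your proposal has to be measured against the standard argument. Your forward direction is correct: a non-trivial central idempotent $e$ yields mutually inverse lattice isomorphisms between $\lat(R)$ and $[0,eR]\times[0,(1-e)R]$, and the identifications $aR\cap eR=aeR$ and $[0,eR]\cong\lat(eRe)$ all go through precisely because $e$ is central.

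The converse, however, has a genuine gap at exactly the point you flag as the crux. Having produced orthogonal idempotents $e$ and $1-e$ with $eR\leftrightarrow(1_{L_1},0_{L_2})$, you assert that left multiplication by an arbitrary $a\in R$ ``must preserve the two factors,'' which would give $aeR\subseteq eR$ and $a(1-e)R\subseteq(1-e)R$ and hence centrality. But $I\mapsto aI$ is only join-preserving, not a lattice endomorphism, and there is no a priori reason it should respect a product decomposition of $\lat(R)$; the assertion that it does is equivalent to the centrality of $e$ that you are trying to prove, so as written the step is circular, and it is not a ``short diagram chase through the definitions of $\vee$ and $\wedge$.'' The standard way to close the gap is via unique complements. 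In $L_{1}\times L_{2}$ the element $(1_{L_1},0_{L_2})$ has the unique complement $(0_{L_1},1_{L_2})$, so $(1-e)R$ is the \emph{unique} complement of $eR$ in $\lat(R)$. Given $a\in R$, set $e'\defeq e+ea(1-e)$. One checks directly that $e'^{2}=e'$, $e'e=e$ and $ee'=e'$, whence $e'R=eR$ and therefore $(1-e')R$ is a complement of $eR$; uniqueness forces $(1-e')R=(1-e)R$, and a short computation with these idempotents shows that this can only happen if $ea(1-e)=0$, i.e.\ $ea=eae$. Running the same argument with the roles of $e$ and $1-e$ exchanged gives $(1-e)ae=0$, i.e.\ $ae=eae$, and combining the two identities yields $ae=ea$ for every $a\in R$, so $e$ is a non-trivial central idempotent and $R$ is reducible. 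With this replacement for the centrality step, your outline becomes a complete proof.
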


\begin{proof} This is established in~\cite[II.II, Thm.~2.9, p.~76]{VonNeumannBook}. \end{proof}

A \emph{continuous} ring is a regular ring $R$ such that $\lat(R)$ is a continuous geometry. A \emph{rank function} on a regular $R$ is a map $\rk \colon R \to [0,1]$ such that \begin{itemize}
	\item[---] $\rk(1)=1$,
	\item[---] $\rk(ab) \leq \min \{ \rk(a),\rk(b)\}$ for all $a,b \in R$,
	\item[---] for all $e,f \in R$, \begin{displaymath}
				\qquad e^{2}=e, \ f^{2}=f, \ ef=fe=0 \ \, \Longrightarrow \ \, \rk(e+f) = \rk(e) + \rk(f),
			\end{displaymath}
	\item[---] $\rk (a) > 0$ for every $a \in R\setminus \{ 0 \}$.\footnote{The third condition readily entails that $\rk(0) = 0$.}
\end{itemize} For any rank function $\rk \colon R \to [0,1]$ on a regular ring $R$, \begin{displaymath}
	d_{\rk} \colon \, R \times R \, \longrightarrow \, [0,1], \quad (a,b) \, \longmapsto \, \rk(a-b) 
\end{displaymath} constitutes a metric on $R$ (see~\cite[II.XVIII, Lem.~18.1, pp. 231--232]{VonNeumannBook} or~\cite[VI.5, Satz~5.1, p.~154]{MaedaBook}).

\begin{thm}[von Neumann~\cite{VonNeumannBook}]\label{theorem:rank.function} If $R$ is an irreducible, continuous ring, then \begin{displaymath}
	\rk_{R} \colon \, R \, \longrightarrow \, [0,1], \quad a \, \longmapsto \, \Delta_{\lat(R)}(aR)
\end{displaymath} is the unique rank function on $R$. \end{thm}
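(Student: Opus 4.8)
The plan is to handle both assertions by transferring the problem to the lattice $L \defeq \lat(R)$. Since $R$ is continuous, $L$ is a continuous geometry, and it is irreducible by Theorem~\ref{theorem:irreducibility}; hence, by the results of von Neumann recalled above, $L$ possesses a unique dimension function $\Delta_{L}$, and $\rk_{R}(a) = \Delta_{L}(aR)$ by definition. I will freely use three standard facts about the regular ring $R$: (i) every principal right ideal is of the form $eR$ for an idempotent $e$ (take $e = ab$ when $aba = a$); (ii) $\lat(R)$ is closed under finite intersections, being a sublattice of the lattice of all right ideals with meet given by intersection; and (iii) if $K \subseteq K'$ in $\lat(R)$, then $K$ is a direct summand of $K'$ with a complement lying again in $\lat(R)$ --- writing $K = fR$ one has $K' = fR \oplus (K' \cap (1-f)R)$, the second summand belonging to $\lat(R)$ by (ii). Conversely, an internal decomposition $K = K' \oplus K''$ with $K', K'' \in \lat(R)$ is realized by orthogonal idempotents $e', e''$ with $e'R = K'$, $e''R = K''$, $(e'+e'')R = K$: one expands a generating idempotent of $K$ along the decomposition and checks that the components are orthogonal idempotents.

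For existence I would verify the four defining properties of a rank function. We have $\rk_{R}(1) = \Delta_{L}(1_{L}) = 1$. For orthogonal idempotents $e, f$ (so that $e+f$ is idempotent) one checks directly that $(e+f)R = eR \vee fR$ and $eR \wedge fR = 0_{L}$, so the additivity axiom of $\Delta_{L}$ yields $\rk_{R}(e+f) = \rk_{R}(e) + \rk_{R}(f)$. If $a \neq 0$ then $aR \neq 0_{L}$, so strict monotonicity of $\Delta_{L}$ gives $\rk_{R}(a) > \Delta_{L}(0_{L}) = 0$. The remaining property, sub-multiplicativity, splits into $\rk_{R}(ab) \leq \rk_{R}(a)$, immediate from $abR \subseteq aR$ and monotonicity of $\Delta_{L}$, and $\rk_{R}(ab) \leq \rk_{R}(b)$: left multiplication by $a$ is a surjective homomorphism of right $R$-modules $bR \to abR$ whose kernel is the intersection of $bR$ with the right annihilator of $a$, hence lies in $\lat(R)$ and, by (iii), is a direct summand of $bR$; thus $abR$ is $R$-module isomorphic to a principal right ideal that is a direct summand of $bR$, and combining the invariance of $\Delta_{L}$ under isomorphism of principal right ideals with the identity $\Delta_{L}(K' \oplus K'') = \Delta_{L}(K') + \Delta_{L}(K'')$ for internal direct sums (a special case of the additivity axiom, since then $K' \wedge K'' = 0_{L}$ and $K' \vee K'' = K$) gives $\rk_{R}(ab) = \Delta_{L}(abR) \leq \Delta_{L}(bR) = \rk_{R}(b)$.

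For uniqueness, let $\rho \colon R \to [0,1]$ be any rank function. Sub-multiplicativity of $\rho$ gives $\rho(a) = \rho(b)$ whenever $aR = bR$ (write $a = bc$ and $b = ad$), so $\rho$ factors as $\rho(a) = \bar\rho(aR)$ for a well-defined map $\bar\rho \colon L \to [0,1]$, and it suffices to prove that $\bar\rho$ is a dimension function, since then $\bar\rho = \Delta_{L}$ by the cited uniqueness, whence $\rho(a) = \bar\rho(aR) = \Delta_{L}(aR) = \rk_{R}(a)$ for all $a$. Now $\bar\rho(0_{L}) = \rho(0) = 0$ and $\bar\rho(1_{L}) = \rho(1) = 1$. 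From the orthogonality axiom of $\rho$ and the idempotent realization of internal decompositions noted in the first paragraph, $\bar\rho$ is additive on internal direct sums in $\lat(R)$; combined with (iii), this yields strict monotonicity of $\bar\rho$ (if $I < J$, write $J = I \oplus I'$ with $0 \neq I' \in \lat(R)$, so $\bar\rho(J) = \bar\rho(I) + \bar\rho(I') > \bar\rho(I)$ by faithfulness of $\rho$). Finally, for the parallelogram identity, write $I = (I \wedge J) \oplus I_{1}$ with $I_{1} \in \lat(R)$ by (iii); since $I \wedge J \subseteq J$ one gets $I \vee J = I + J = I_{1} + J$, while $I_{1} \cap J \subseteq I_{1} \cap (I \wedge J) = 0$, so in fact $I \vee J = I_{1} \oplus J$; additivity of $\bar\rho$ on internal direct sums then gives $\bar\rho(I \vee J) = \bar\rho(I_{1}) + \bar\rho(J)$ and $\bar\rho(I) = \bar\rho(I \wedge J) + \bar\rho(I_{1})$, hence $\bar\rho(I \vee J) + \bar\rho(I \wedge J) = \bar\rho(I) + \bar\rho(J)$.

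The step I expect to be the main obstacle is the single genuinely non-formal input above: the invariance of $\Delta_{L}$ under $R$-module isomorphism of principal right ideals, used to establish $\rk_{R}(ab) \leq \rk_{R}(b)$. This is where von Neumann's perspectivity theory enters --- isomorphic principal right ideals of a regular ring are projectively equivalent in $\lat(R)$, and projectively equivalent elements of a continuous geometry have equal dimension. Everything else reduces to manipulations with idempotents, modulo the structure theory of regular rings and the already-cited existence and uniqueness of the dimension function on an irreducible continuous geometry.
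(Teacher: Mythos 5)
The paper does not actually prove this theorem: its ``proof'' is a two-line citation of von Neumann's book (existence via \cite{VonNeumannBook}, II.XVII, Thm.~17.1, and uniqueness via Thm.~17.2). Your proposal reconstructs the underlying argument, and the reconstruction is correct. The verifications of the four rank-function axioms for $\Delta_{\lat(R)}(a R)$ all check out (in particular, the identities $(e+f)R = eR + fR$ and $eR \cap fR = 0$ for orthogonal idempotents, and the kernel-splitting argument for $\rk_R(ab)\le\rk_R(b)$ are sound), and the uniqueness argument --- descending a rank function $\rho$ to a well-defined map $\bar\rho$ on $\lat(R)$ via $\rho(a)=\rho(b)$ whenever $aR=bR$, verifying that $\bar\rho$ is a dimension function through additivity on internal direct sums realized by orthogonal idempotents, and then invoking the uniqueness of $\Delta_{\lat(R)}$ on the irreducible continuous geometry $\lat(R)$ --- is precisely the mechanism behind von Neumann's Thm.~17.2. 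You correctly isolate the one genuinely non-formal input, namely that $\Delta_{\lat(R)}$ is invariant under $R$-module isomorphism of principal right ideals; this does require von Neumann's theorem that isomorphic principal right ideals of a regular ring are perspective in $\lat(R)$ (after which equality of dimension follows from the valuation identity applied to a common complement), and leaving it as a cited black box is reasonable, since proving it from scratch would amount to redeveloping a chapter of \cite{VonNeumannBook}. In short: where the paper defers entirely to the reference, you supply a correct and essentially complete proof modulo that single explicitly flagged citation.
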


\begin{proof} If $R$ is an irreducible, continuous ring, then $\rk_{R}$ constitutes a rank function on $R$ due to~\cite[II.XVII, Thm.~17.1, p.~224]{VonNeumannBook} and is unique as such by~\cite[II.XVII, Thm.~17.2, p.~226]{VonNeumannBook} \end{proof}

Let $R$ be an irreducible, continuous ring. Then $R$ is complete with respect to the \emph{rank metric} $d_{\rk, R} \defeq d_{\rk_{R}}$ according to~\cite[II.XVII, Thm.~17.4, p.~230]{VonNeumannBook}. The topology on $R$ generated by $d_{\rk, R}$ will be referred to as the \emph{rank topology} of $R$. The unit group \begin{displaymath}
	\GL(R) \, \defeq \, \{ a \in R \mid \exists b \in R \colon \, ab=ba=1 \}
\end{displaymath} endowed with the relative rank topology constitutes a topological group (cf.~\cite[Rem.~7.8]{FMS22}), which will be denoted by $\GL(R)_{\rk}$. Since its topology is generated by a bi-invariant metric (namely, the restriction of $d_{\rk,R}$), the topological group $\GL(R)_{\rk}$ has small invariant neighborhoods. 

\begin{lem}\label{lemma:isometric.action} Let $R$ be an irreducible, continuous ring. Then \begin{displaymath}
	\GL(R) \times \lat(R) \, \longrightarrow \, \lat(R), \quad (g,I) \, \longmapsto \, gI
\end{displaymath} is a continuous isometric action of $\GL(R)_{\rk}$ on $(\lat(R),\delta_{\lat(R)})$. Furthermore, for each $t \in [0,1]$, \begin{displaymath}
	\lat_{t}(R) \, \defeq \, \{ I \in \lat(R) \mid \Delta_{\lat(R)}(I) = t \} \, = \, \{ aR \mid a \in R, \, \rk_{R}(a) = t \} 
\end{displaymath} is $\GL(R)$-invariant.\end{lem}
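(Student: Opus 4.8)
The plan is to establish, in order: (i) that the formula defines an action by lattice automorphisms, (ii) that this action is isometric and preserves each set $\lat_{t}(R)$, (iii) a Lipschitz-type estimate relating $\delta_{\lat(R)}$ to $\rk_{R}$, and (iv) joint continuity, which I would deduce from (ii) and (iii).

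First I would note that for $g \in \GL(R)$ and $a \in R$ one has $g(aR) = (ga)R \in \lat(R)$, and that the action axioms $1\cdot I = I$ and $g(hI) = (gh)I$ are immediate. Left multiplication by a fixed $g \in \GL(R)$ is an additive bijection of $R$, with inverse left multiplication by $g^{-1}$, carrying principal right ideals to principal right ideals and preserving set-theoretic inclusion in both directions; hence $I \mapsto gI$ is an automorphism of the lattice $\lat(R)$, so that $g(I\vee J) = gI \vee gJ$ and $g(I \wedge J) = gI \wedge gJ$. Since $R$ is an irreducible, continuous ring, $\lat(R)$ is an irreducible continuous geometry by Theorem~\ref{theorem:irreducibility}, and therefore admits a \emph{unique} dimension function $\Delta_{\lat(R)}$. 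As $I \mapsto \Delta_{\lat(R)}(gI)$ is readily checked to be again a dimension function on $\lat(R)$ (it annihilates $0_{\lat(R)}$, sends $R = 1_{\lat(R)}$ to $1$, satisfies the modular identity, and is strictly monotone because $I \mapsto gI$ is an order isomorphism), uniqueness forces $\Delta_{\lat(R)}(gI) = \Delta_{\lat(R)}(I)$ for all $g \in \GL(R)$ and $I \in \lat(R)$. The isometry of the action with respect to $\delta_{\lat(R)}$ and the $\GL(R)$-invariance of each $\lat_{t}(R)$ then follow at once; moreover the stated equality $\lat_{t}(R) = \{ aR \mid a \in R, \, \rk_{R}(a) = t\}$ is just the identity $\rk_{R}(a) = \Delta_{\lat(R)}(aR)$ from Theorem~\ref{theorem:rank.function} combined with the surjectivity of $R \to \lat(R), \, a \mapsto aR$.

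The step I expect to require the most care is the Lipschitz-type bound
\begin{displaymath}
	\delta_{\lat(R)}(aR,bR) \, \leq \, 2\,\rk_{R}(a-b) \qquad (a,b \in R),
\end{displaymath}
which I would prove as follows. Set $c \defeq a - b$; then $a = b + c$ and $b = a + (-c)$, so $aR \leq bR \vee cR$ and $bR \leq aR \vee cR$ (using also $cR = (-c)R$), whence $aR \vee bR \leq bR \vee cR$ and $aR \vee bR \leq aR \vee cR$. From the identity $\Delta_{\lat(R)}(I\vee J) + \Delta_{\lat(R)}(I \wedge J) = \Delta_{\lat(R)}(I) + \Delta_{\lat(R)}(J)$ and non-negativity of $\Delta_{\lat(R)}$ one obtains $\Delta_{\lat(R)}(I \vee J) \leq \Delta_{\lat(R)}(I) + \Delta_{\lat(R)}(J)$; together with $\rk_{R}(c) = \Delta_{\lat(R)}(cR)$ this gives $\Delta_{\lat(R)}(aR \vee bR) \leq \min\{\rk_{R}(a), \rk_{R}(b)\} + \rk_{R}(c)$, and feeding this back into the same identity yields $\Delta_{\lat(R)}(aR \wedge bR) \geq \max\{\rk_{R}(a), \rk_{R}(b)\} - \rk_{R}(c)$. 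Subtracting the two and using $\min \leq \max$ gives the bound.

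Finally, for joint continuity I would fix $g_{0} \in \GL(R)$ and $I_{0} \in \lat(R)$, and, given $g \in \GL(R)$ and $I \in \lat(R)$, use the isometry of $g_{0}$ and $g_{0}^{-1}$ to obtain
\begin{displaymath}
	\delta_{\lat(R)}(gI, g_{0}I_{0}) \, \leq \, \delta_{\lat(R)}\!\bigl(g_{0}^{-1}gI, I\bigr) + \delta_{\lat(R)}(I, I_{0}).
\end{displaymath}
Writing $I = aR$ and applying the Lipschitz bound to $g_{0}^{-1}ga$ and $a$, together with $\rk_{R}\!\bigl((g_{0}^{-1}g - 1)a\bigr) \leq \rk_{R}(g_{0}^{-1}g - 1) = \rk_{R}\!\bigl(g_{0}^{-1}(g - g_{0})\bigr) \leq \rk_{R}(g - g_{0})$, bounds the first term by $2\,\rk_{R}(g - g_{0})$. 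Hence $\delta_{\lat(R)}(gI, g_{0}I_{0}) \leq 2\,\rk_{R}(g - g_{0}) + \delta_{\lat(R)}(I, I_{0})$, which is smaller than any prescribed $\epsilon > 0$ as soon as $\rk_{R}(g - g_{0}) < \epsilon/4$ and $\delta_{\lat(R)}(I, I_{0}) < \epsilon/2$. Since the topology of $\GL(R)_{\rk}$ is the one induced by $d_{\rk,R}$, this is precisely joint continuity, completing the argument; the only genuinely non-formal ingredient is the Lipschitz estimate of the third paragraph.
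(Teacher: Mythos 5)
Your proof is correct and follows the same overall architecture as the paper's: invariance of the dimension function under left multiplication by units, hence isometry and invariance of each $\lat_{t}(R)$; a $2$-Lipschitz estimate for the orbit maps; and joint continuity by combining the two. You deviate in two places, both legitimate. First, where the paper gets $\Delta_{\lat(R)}(gaR)=\Delta_{\lat(R)}(aR)$ directly from $\rk_{R}(ga)=\rk_{R}(a)$ (submultiplicativity of the rank applied to $g$ and $g^{-1}$, via Theorem~\ref{theorem:rank.function}), you observe that $I\mapsto\Delta_{\lat(R)}(gI)$ is again a dimension function on the irreducible continuous geometry $\lat(R)$ and appeal to von Neumann's uniqueness theorem; this is a clean lattice-theoretic argument, if a slightly heavier tool than necessary. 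Second, and more substantially, the paper imports the estimate $\delta_{\lat(R)}(aI,bI)\leq 2\min\{\rk_{R}(a-b),\Delta_{\lat(R)}(I)\}$ from \cite[Lem.~7.9(3)]{FMS22}, whereas you prove the special case $\delta_{\lat(R)}(aR,bR)\leq 2\,\rk_{R}(a-b)$ from scratch, using $aR\leq bR\vee (a-b)R$, the valuation identity for $\Delta_{\lat(R)}$, and monotonicity; writing $I=aR$ then reduces the orbit-map estimate to exactly this case, so your argument does close. The payoff of your route is a self-contained proof of the lemma; the cost is only the loss of the additional bound by $2\Delta_{\lat(R)}(I)$, which is not needed here.
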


\begin{proof} Evidently, $\GL(R) \times \lat(R) \to \lat(R), \, (g,I) \mapsto gI$ is a well-defined action. If $g \in \GL(R)$, then
\begin{equation}\tag{1}\label{unit}
	\Delta_{\lat(R)}(gaR) \, \stackrel{\ref{theorem:rank.function}}{=} \, \rk_{R}(ga) \, = \, \rk_{R}(a) \, \stackrel{\ref{theorem:rank.function}}{=} \, \Delta_{\lat(R)}(aR)
\end{equation} for all $a \in R$, thus \begin{align*}
	\delta_{\lat(R)}(gI,gJ) \, &= \, \Delta_{\lat(R)}(gI + gJ)-\Delta_{\lat(R)}(gI \cap gJ) \\
		& = \, \Delta_{\lat(R)}(g(I + J))-\Delta_{\lat(R)}(g(I \cap J)) \\
		& \stackrel{\eqref{unit}}{=} \, \Delta_{\lat(R)}(I + J)-\Delta_{\lat(R)}(I \cap J) \, = \, \delta_{\lat(R)}(I,J)
\end{align*} for all $I,J \in \lat(R)$, which shows that the considered action is isometric. Furthermore, as proved in~\cite[Lem.~7.9(3)]{FMS22}, \begin{equation}\tag{2}\label{orbital}
	\forall I \in \lat(R) \ \forall a,b \in R \colon \qquad \delta_{\lat(R)}(aI,bI) \, \leq \, 2\min \{ \rk_{R}(a-b), \Delta_{\lat(R)}(I)\} .
\end{equation} In turn, \begin{displaymath}
	\delta_{\lat(R)}(gI,hI) \, \stackrel{\eqref{orbital}}{\leq} \, 2\rk_{R}(g-h) \, = \, 2d_{\rk,R}(g,h)
\end{displaymath} for all $I \in \lat(R)$ and $g,h \in \GL(R)$. This means that, for each $I \in \lat(R)$, the map \begin{displaymath}
	(\GL(R),d_{\rk,R}) \, \longrightarrow \, (\lat(R),\delta_{\lat(R)}), \quad g \, \longmapsto \, gI
\end{displaymath} is $2$-Lipschitz, in particular continuous. Since the action is also isometric, thus the map $\GL(R) \times \lat(R) \to \lat(R), \, (g,I) \mapsto gI$ is continuous. The final assertion is an immediate consequence of Theorem~\ref{theorem:rank.function} and~\eqref{unit}. \end{proof}

An irreducible, continuous ring $R$ will be called \emph{discrete} if the rank topology of $R$ is discrete.

\begin{remark}[\cite{VonNeumannBook}, I.VII, Thm.~7.3, p.~58]\label{remark:discreteness} If $L$ is an irreducible continuous geometry, then either $\Delta_{L}(L) = [0,1]$, or there exists $n \in \N_{>0}$ with \begin{displaymath}
	\Delta_{L}(L) \! \left. \, = \, \left\{ \tfrac{k}{n} \, \right\vert k \in \{ 0,\ldots,n \} \right\} .
\end{displaymath} This readily implies that an irreducible, continuous ring $R$ is non-discrete if and only if $\rk_{R}(R) = [0,1]$. \end{remark}

\begin{lem}\label{lemma:skew.amenable} Let $R$ be a non-discrete irreducible, continuous ring, let $t \in [0,1]$. If $\GL(R)_{\rk}$ is amenable, then the action of $\GL(R)$ on $(\lat_{t}(R),\delta_{\lat(R)})$ is Eymard--Greenleaf amenable. \end{lem}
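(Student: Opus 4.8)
The plan is to combine Lemma~\ref{lemma:pestov} with Lemma~\ref{lemma:isometric.action}. First I would observe that, by the discussion preceding Lemma~\ref{lemma:isometric.action}, the topological group $\GL(R)_{\rk}$ has small invariant neighborhoods, since its topology is generated by the bi-invariant metric obtained by restricting $d_{\rk,R}$. Next, since $R$ is non-discrete, Remark~\ref{remark:discreteness} gives $\rk_{R}(R) = [0,1]$, so the set $\lat_{t}(R) = \{ aR \mid a \in R, \, \rk_{R}(a) = t\}$ is non-empty for the given $t \in [0,1]$; by Lemma~\ref{lemma:isometric.action} it is $\GL(R)$-invariant, and the ambient action of $\GL(R)_{\rk}$ on $(\lat(R),\delta_{\lat(R)})$ is continuous and isometric, hence so is its restriction to the (non-empty) invariant subspace $(\lat_{t}(R),\delta_{\lat(R)})$.

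Now I would invoke the hypothesis: assuming $\GL(R)_{\rk}$ is amenable, the conditions of Lemma~\ref{lemma:pestov} are met (amenable topological group with small invariant neighborhoods, acting continuously and isometrically on a non-empty metric space), so that action on $(\lat_{t}(R),\delta_{\lat(R)})$ is Eymard--Greenleaf amenable. This is exactly the desired conclusion.

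There is essentially no obstacle here; the statement is a direct application of the two preceding lemmas, and the only points requiring a word are the non-emptiness of $\lat_{t}(R)$ (which is where non-discreteness enters, via Remark~\ref{remark:discreteness}) and the fact that the restriction of a continuous isometric action to a closed or merely invariant subspace is again a continuous isometric action of the same topological group, which is immediate.

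\begin{proof} As noted above, the topological group $\GL(R)_{\rk}$ has small invariant neighborhoods, as its topology is generated by the bi-invariant metric $d_{\rk,R}|_{\GL(R) \times \GL(R)}$. Since $R$ is non-discrete, Remark~\ref{remark:discreteness} entails that $\rk_{R}(R) = [0,1]$, whence $\lat_{t}(R) = \{ aR \mid a \in R, \, \rk_{R}(a) = t \} \ne \emptyset$. By Lemma~\ref{lemma:isometric.action}, the set $\lat_{t}(R)$ is $\GL(R)$-invariant, and the action of $\GL(R)_{\rk}$ on $(\lat(R),\delta_{\lat(R)})$ is continuous and isometric; therefore its restriction to the non-empty invariant subspace $(\lat_{t}(R),\delta_{\lat(R)})$ is a continuous isometric action of $\GL(R)_{\rk}$. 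If $\GL(R)_{\rk}$ is amenable, then Lemma~\ref{lemma:pestov} applies and shows that this action on $(\lat_{t}(R),\delta_{\lat(R)})$ is Eymard--Greenleaf amenable. \end{proof}
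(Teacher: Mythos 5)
Your proposal is correct and follows essentially the same route as the paper: non-emptiness of $\lat_{t}(R)$ via Remark~\ref{remark:discreteness}, the small-invariant-neighborhoods property of $\GL(R)_{\rk}$, and then a direct application of Lemma~\ref{lemma:isometric.action} together with Lemma~\ref{lemma:pestov}. The only difference is that you spell out the (immediate) restriction of the action to the invariant subset, which the paper leaves implicit.
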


\begin{proof} Note that $\lat_{t}(R) \ne \emptyset$ thanks to Remark~\ref{remark:discreteness} and non-discreteness of~$R$. Since the topological group $\GL(R)_{\rk}$ has small invariant neighborhoods, thus the claim is a direct consequence of Lemma~\ref{lemma:isometric.action} and Lemma~\ref{lemma:pestov}. \end{proof}

For the sake of a transparent exposition, we conclude this section with a clarifying remark about discrete irreducible, continuous rings.

\begin{remark}[von Neumann~\cite{VonNeumannBook}]\label{remark:matrix.rings} A ring is a discrete irreducible, continuous ring if and only if it is isomorphic to a matrix ring $\Mat_{n}(D)$ for some division ring $D$ and some $n \in \N_{>0}$. We sketch the proof of this fact.
	
($\Longleftarrow$) Consider a division ring $D$ and let $n \in \N_{>0}$. Then $R \defeq \Mat_{n}(D)$ constitutes an irreducible, continuous ring due to~\cite[II.II, Thm~2.13, p.~81]{VonNeumannBook} and~\cite[IX.2, Satz~2.1, p.~185]{MaedaBook}. The uniqueness assertion of Theorem~\ref{theorem:rank.function} and the relevant properties of the normalization of the natural rank map on $R$ (see~\cite[I.10.12, p.~359--360]{bourbaki}) then imply that $\left. \rk_{R}(R) = \left\{ \tfrac{k}{n} \, \right\vert k \in \{ 0,\ldots,n \} \right\}$. In particular, the rank topology of $R$ is discrete.

($\Longrightarrow$) Let $R$ be a discrete irreducible, continuous ring. By Remark~\ref{remark:discreteness}, the set $\rk_{R}(R) = \Delta_{\lat(R)}(\lat(R))$ is finite. As $\Delta_{\lat(R)}$ is strictly monotone, it follows that every upward (resp., downward) directed subset of $\lat(R)$ has a greatest (resp., least) element. We deduce that every right ideal of $R$ belongs to $\lat(R)$: if $I$ is a right ideal of $R$, then we consider the upward directed set $\mathscr{J}$ of all finitely generated right ideals of $R$ contained in $I$, and we note that $\mathscr{J} \subseteq \lat(R)$ by regularity of $R$ (see~\cite[II.II, Thm.~2.3, p.~71]{VonNeumannBook}), which entails that $\mathscr{J}$ has a greatest element, whence $I = \bigcup \mathscr{J} \in \lat(R)$. Now, since $\lat(R)$ coincides with the set of all right ideals of $R$, our observation about downward directed subsets of $\lat(R)$ implies that $R$ is right Artinian. Furthermore, $R$ is simple by~\cite[VII.3, Hilfssatz~3.1, p.~166]{MaedaBook} (see also~\cite[Cor.~13.26, p.~170]{GoodearlBook}), thus the desired conclusion follows by the Artin--Wedderburn theorem. \end{remark}

\section{Geometry of projections and affiliated operators}\label{section:projections}

In this section we prove that, if the unit group of the ring of operators affiliated with a $\mathrm{II}_{1}$ factor $M$ is amenable with respect to the rank topology, then for any $t \in [0,1]$ the action of the unitary group of $M$ on the space of projections of $M$ of trace $t$ is Eymard--Greenleaf amenable (Lemma~\ref{lemma:amenable}).

We start off with some very general remarks on von Neumann algebras. For background, the reader is referred to~\cite{KadisonRingrose,ConwayBook}. Given a von Neumann algebra $M$, we consider its \emph{unitary group} \begin{displaymath}
	\U(M) \, \defeq \, \{ u \in M \mid uu^{\ast} = u^{\ast}u = 1 \} ,
\end{displaymath} as well as the set \begin{displaymath}
	\Pro(M) \, \defeq \, \left\{ p \in M \left\vert \, p^{2} = p = p^{\ast} \right\} \right.
\end{displaymath} of all \emph{projections} of $M$. If $M$ is a von Neumann factor of type $\mathrm{II}_{1}$, then we let $\tr_{M} \colon M \to \C$ denote its (necessarily faithful, normal) unique tracial state (cf.~\cite[Thm.~8.2.8, p.~517]{KadisonRingrose}), which in turn gives rise to the \emph{trace metric} \begin{displaymath}
	d_{\tr,M} \colon \, M \times M \, \longrightarrow \, \R_{\geq 0} , \quad (x,y) \, \longmapsto \, \sqrt{\tr_{M}((x-y)^{\ast}(x-y))} .
\end{displaymath}

\begin{remark}\label{remark:isometric.action} Let $M$ be a von Neumann factor of type $\mathrm{II}_{1}$. Then \begin{displaymath}
	\U(M) \times \Pro(M) \, \longrightarrow \, \Pro(M) , \quad (u,p) \, \longmapsto \, upu^{\ast}
\end{displaymath} is an isometric action of $\U(M)$ on $(\Pro(M),d_{\tr,M})$. For each $t \in [0,1]$, \begin{displaymath}
	\Pro_{t}(M) \, \defeq \, \{ p \in \Pro(M) \mid \tr_{M}(p) = t \}
\end{displaymath} is a $\U(M)$-invariant subset of $\Pro(M)$. Of course, $\Pro_{0}(M) = \{ 0 \}$ and $\Pro_{1}(M) = \{ 1 \}$. \end{remark}

The following remark summarizes several facts about the geometry of projections in $\mathrm{II}_{1}$ factors.

\begin{remark}\label{remark:projection.lattice} Let $M$ be a von Neumann algebra. We equip $\Pro(M)$ with the partial order defined by \begin{displaymath}
	p \leq q \quad :\Longleftrightarrow \quad qp = p \qquad (p,q \in \Pro(M)) .
\end{displaymath} Observe that, for any two $p,q \in \Pro(M)$, \begin{equation}\tag{$\dagger$}\label{swap}
	p \leq q \ \ \Longleftrightarrow \ \ qp=p \ \ \Longleftrightarrow \ \ (qp)^{\ast} = p^{\ast} \ \ \Longleftrightarrow \ \ p^{\ast}q^{\ast} = p^{\ast} \ \ \Longleftrightarrow \ \ pq = p .
\end{equation} Then $\Pro(M)$ is a complete lattice on which the map \begin{displaymath}
	\Pro(M) \, \longrightarrow \, \Pro(M), \quad p \, \longmapsto \, 1-p
\end{displaymath} constitutes an orthocomplementation (see~\cite[Prop.~6.3, p.~82]{redei}). Suppose now that $M$ is finite. Then $\Pro(M)$ is also modular (see~\cite[Prop.~6.14, p.~99]{redei}), thus a continuous geometry by~\cite{kaplansky}. Moreover, $M$ is a non-zero factor if and only if $\Pro(M)$ is irreducible (cf.~\cite[1.1, \S6, Ex.~11C, p.~39]{BerberianBook}), in which case \begin{displaymath}
	\Delta_{\Pro(M)} \, = \, {\tr_{M}}\vert_{\Pro(M)}
\end{displaymath} (see~\cite[8.4, p.~530]{KadisonRingrose}). In particular, if $M$ is a factor of type $\mathrm{II}_{1}$, then \begin{displaymath}
	\Delta_{\Pro(M)}(\Pro(M)) \, = \, \tr_{M}(\Pro(M)) \, = \, [0,1]
\end{displaymath} (see~\cite[Thm.~8.4.4(ii), p.~533]{KadisonRingrose}). \end{remark}

Now let $M$ be a finite von Neumann algebra acting on a Hilbert space~$H$. Then $\aff(M)$ is defined as the set of all densely defined, closed, linear operators on $H$ \emph{affiliated with $M$}, i.e., those commuting with every unitary in the commutant of $M$. That is, a densely defined, closed, linear operator $a$ on $H$ belongs to $\aff(M)$ if and only if $ua=au$ for every $u \in \U(M')$ (which entails that the domain of $a$ is $\U(M')$-invariant).

\begin{thm}[Murray \& von Neumann~\cite{MurrayVonNeumann}]\label{theorem:affiliated} Let $M$ be a finite von Neumann algebra. Then $\aff(M)$, equipped with the addition \begin{displaymath}
	\aff(M) \times \aff(M) \, \longrightarrow \, \aff(M), \quad (a,b) \, \longmapsto \, \overline{a+b}
\end{displaymath} and the multiplication \begin{displaymath}
	\aff(M) \times \aff(M) \, \longrightarrow \, \aff(M), \quad (a,b) \, \longmapsto \, \overline{ab} ,
\end{displaymath} is a unital ring, of which $M$ constitutes a unital subring. \end{thm}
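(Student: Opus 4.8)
The plan is to show that $\aff(M)$ is a ring under the claimed operations, following the classical Murray--von Neumann construction. The key technical input is the theory of unbounded operators affiliated with a finite von Neumann algebra: the crucial fact is that for $a, b \in \aff(M)$, the (a priori merely densely defined, closable but not necessarily closed) operators $a+b$ and $ab$ are closable, so that their closures $\overline{a+b}$ and $\overline{ab}$ make sense, and these closures again lie in $\aff(M)$. The heart of the matter is a domain/density argument: one must verify that the natural domains $\operatorname{dom}(a) \cap \operatorname{dom}(b)$ (for the sum) and $\{\xi \in \operatorname{dom}(b) : b\xi \in \operatorname{dom}(a)\}$ (for the product) are dense in $H$ and affiliated-invariant, and that the resulting closed operators are strongly affiliated with $M$.

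First I would recall (or cite Murray--von Neumann~\cite{MurrayVonNeumann}, with~\cite{KadisonRingrose} as a modern reference) the key finiteness lemma: if $a$ is a densely defined closed operator affiliated with the finite von Neumann algebra $M$, then $a$ is \emph{preclosed with dense domain} and, more importantly, the domains in question are ``essentially dense'' in the sense that their closures contain the range of a projection in $M$ of trace arbitrarily close to $1$; finiteness of $M$ is exactly what forces such domains to be cofinal and their intersections still essentially dense. From this one deduces closability of $a+b$ and $ab$ and the identification of $\overline{a+b}, \overline{ab} \in \aff(M)$. Then I would check the ring axioms one at a time: associativity and distributivity reduce to equalities of closed operators that agree on a common essentially dense domain (hence everywhere, since a closed operator is determined by its restriction to any core), additive inverses are given by $-a$ (which is manifestly in $\aff(M)$), the additive identity is the zero operator, and the multiplicative identity is $1 \in M \subseteq \aff(M)$. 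Finally, since bounded operators are everywhere defined and closed, for $a, b \in M$ one has $\overline{a+b} = a+b$ and $\overline{ab} = ab$ computed in $M$, so $M$ is a unital subring.

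The main obstacle, and the place where finiteness of $M$ is genuinely used, is precisely the closedness and affiliation of the products and sums: for general (non-finite) von Neumann algebras the sum or product of two affiliated operators need not be closable, and the whole construction collapses. So the crux is establishing the ``essential density'' of intersections of domains under the finiteness hypothesis, from which closability and the verification that closures land back in $\aff(M)$ follow; once that is in hand, the ring axioms are routine core-chasing arguments with closed operators. Since this theorem is a cornerstone of~\cite{MurrayVonNeumann} and is reproduced in standard references, I would present the proof as a guided recollection of that argument rather than a from-scratch development, emphasizing the finiteness step and citing~\cite[\S8]{KadisonRingrose} for the details of the unbounded-operator manipulations.
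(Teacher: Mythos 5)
Your proposal is correct and is essentially the paper's approach: the paper proves this theorem purely by citation to Murray--von Neumann (Thm.~XV) and Kadison--Liu, and your sketch of the essential-density/finiteness argument for closability of sums and products, followed by core-chasing for the ring axioms, is precisely the standard argument contained in those references. No gaps worth flagging.
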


\begin{proof} This is due to~\cite[Thm.~XV, p.~229]{MurrayVonNeumann} (see also~\cite[Thm.~6.13]{KadisonLiu}). \end{proof}

The reader is referred to~\cite[Sect.~6.2, pp.~32--36]{KadisonLiu} for a comprehensive account on and to~\cite{Berberian57,BerberianBook,Berberian82} for alternative algebraic descriptions of the rings constructed above. We confine ourselves to the following proposition, isolating the information relevant for our purposes.

\begin{prop}[von Neumann~\cite{VonNeumannBook}, Feldman~\cite{feldman}]\label{proposition:affiliated} Let $M$ be a finite von Neumann algebra. Then $\aff(M)$ is a continuous ring, and \begin{displaymath}
	\kappa_{M} \colon \, \Pro(M) \, \longrightarrow \, \lat(\aff(M)), \quad p \, \longmapsto \, p\aff(M)
\end{displaymath} is an order isomorphism. \end{prop}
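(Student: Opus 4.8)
The plan is to establish the two assertions of Proposition~\ref{proposition:affiliated} separately, relying in each case on the cited sources to supply the technical heart of the matter. First I would address the claim that $\aff(M)$ is a continuous ring. By Theorem~\ref{theorem:affiliated}, $\aff(M)$ is a unital ring containing $M$ as a unital subring. The key input here is that $\aff(M)$ is von Neumann regular: this goes back to the work of von Neumann and was worked out carefully by Feldman~\cite{feldman}; indeed, every densely defined closed operator affiliated with a finite von Neumann algebra admits a polar-type decomposition allowing one to produce the required quasi-inverse $b$ with $aba=a$. Once regularity is in hand, $\lat(\aff(M))$ is automatically a complemented, modular lattice by~\cite[II.II, Thm.~2.4, p.~72]{VonNeumannBook}, so it remains to verify that this lattice is \emph{continuous}, i.e., that the chain-continuity axioms for $\vee$ and $\wedge$ hold. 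The cleanest route is to transport the problem to the projection lattice via the map $\kappa_M$ of the second assertion: since $M$ is finite, $\Pro(M)$ is already known to be a continuous geometry by Remark~\ref{remark:projection.lattice} (combining the modularity result of~\cite{redei} with~\cite{kaplansky}), so it suffices to prove that $\kappa_M$ is an order isomorphism $\Pro(M)\to\lat(\aff(M))$ and then the continuity axioms for $\lat(\aff(M))$ follow by pulling them back along $\kappa_M^{-1}$.

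So the bulk of the work — and the second assertion in its own right — is to show that $\kappa_M\colon p\mapsto p\aff(M)$ is a well-defined, order-preserving, order-reflecting bijection onto $\lat(\aff(M))$. I would proceed as follows. First, every $p\in\Pro(M)$ is idempotent, so $p\aff(M)$ is a principal right ideal, i.e., an element of $\lat(\aff(M))$; thus $\kappa_M$ is well defined. Monotonicity is immediate: if $p\le q$ then $qp=p$, hence $p\aff(M)=qp\aff(M)\subseteq q\aff(M)$. For surjectivity, take any $a\in\aff(M)$; by regularity of $\aff(M)$ there is an idempotent $e$ with $eR=aR$ (where $R\defeq\aff(M)$), and then using the existence of relative ranges/projections for affiliated operators — concretely, the right support projection of $a$ (equivalently, the range projection of $a^\ast a$, which lies in $M$ since $M$ is finite and spectral projections of affiliated self-adjoint operators belong to $M$) — one shows $aR=pR$ for a suitable $p\in\Pro(M)$. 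For injectivity and order-reflection simultaneously, I would show that $p\aff(M)\subseteq q\aff(M)$ implies $p\le q$: if $p\in q\aff(M)$ then $qp=p$ (multiply by $q$ on the left, using $q^2=q$), which is exactly $p\le q$; hence $\kappa_M$ reflects the order, and in particular $p\aff(M)=q\aff(M)$ forces $p\le q$ and $q\le p$, so $p=q$. Assembling these points gives that $\kappa_M$ is an order isomorphism.

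The main obstacle I anticipate is the surjectivity step, i.e., showing that \emph{every} principal right ideal of $\aff(M)$ has the form $p\aff(M)$ with $p$ a genuine \emph{projection} in $M$ (not merely an idempotent in $\aff(M)$). This requires knowing that the right support projection of an affiliated operator $a$ actually lies in $M$ and that $a\aff(M)=p\aff(M)$ for that $p$; the first point uses finiteness of $M$ (so that $a^\ast a$ is affiliated and its bounded spectral projections are in $M$, with the range projection of $a^\ast a$ being their supremum, again in $M$ by normality), and the second is a routine manipulation with the polar decomposition $a=v|a|$ of the affiliated operator, valid because $M$ is finite. I would cite~\cite{feldman} and~\cite[Sect.~6.2]{KadisonLiu} for the polar decomposition and the regularity of $\aff(M)$, and~\cite{kaplansky} together with Remark~\ref{remark:projection.lattice} for the continuity of $\Pro(M)$. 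Once surjectivity is secured, everything else is formal: the order isomorphism $\kappa_M$ transports the continuous-geometry structure of $\Pro(M)$ to $\lat(\aff(M))$, completing the proof that $\aff(M)$ is a continuous ring.
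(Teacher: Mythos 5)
Your proposal is correct and follows essentially the same route as the paper, which likewise invokes Feldman's Theorem~2 (resting on von Neumann's appendix) for the regularity of $\aff(M)$ and for $\kappa_{M}$ being an order isomorphism, and then transports the continuous-geometry structure of $\Pro(M)$ from Remark~\ref{remark:projection.lattice} along $\kappa_{M}$ to conclude that $\aff(M)$ is a continuous ring. One small slip in your surjectivity sketch: since $\lat(\aff(M))$ consists of principal \emph{right} ideals, the projection with $a\aff(M)=p\aff(M)$ is the \emph{left} support of $a$ (the range projection of $a$, equivalently of $aa^{\ast}$), not the right support/range projection of $a^{\ast}a$; this does not affect the overall argument.
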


\begin{proof} By~\cite[Thm.~2]{feldman} (which is based on~\cite[II.II, Appx.~2, (VI), p.~89--90]{VonNeumannBook}), the ring $\aff(M)$ is regular and the mapping $\kappa_{M}$ is an order isomorphism. Consequently, $\lat(\aff(M)) \cong \Pro(M)$ is a continuous geometry by Remark~\ref{remark:projection.lattice}, wherefore $\aff(M)$ is indeed a continuous ring. \end{proof}

\begin{remark}\label{remark:affiliated} Let $M$ be a von Neumann factor of type $\mathrm{II}_{1}$. Then \begin{itemize}
	\item[$(1)$] $\aff(M)$ is irreducible by Proposition~\ref{proposition:affiliated}, Remark~\ref{remark:projection.lattice}, Theorem~\ref{theorem:irreducibility},
	\item[$(2)$] ${\tr_{M}}\vert_{\Pro(M)} \stackrel{\ref{remark:projection.lattice}}{=} \Delta_{\Pro(M)} \stackrel{\ref{proposition:affiliated}}{=} {\Delta_{\lat(\aff(M))}} \circ {\kappa_{M}} \stackrel{\ref{theorem:rank.function}}{=} {\rk_{\aff(M)}}\vert_{\Pro(M)}$,
	\item[$(3)$] $\aff(M)$ is non-discrete, since \begin{displaymath}
					\qquad \rk_{\aff(M)}(\aff(M)) \, \stackrel{(2)}{=} \, \tr_{M}(\Pro (M)) \, \stackrel{\ref{remark:projection.lattice}}{=} \, [0,1] .
				\end{displaymath} 
\end{itemize} \end{remark}

The map from Proposition~\ref{proposition:affiliated} has the following additional properties.

\begin{lem}\label{lemma:isometric} Let $M$ be a $\mathrm{II}_{1}$ factor and let $R \defeq \aff(M)$. Then \begin{itemize}
	\item[$(1)$] $\kappa_{M} \colon \Pro(M) \to \lat(R)$ is $\U(M)$-equivariant,
	\item[$(2)$] $\kappa_{M}(\Pro_{t}(M)) = \lat_{t}(R)$ for each $t \in [0,1]$, and
	\item[$(3)$] $\kappa_{M}^{-1} \colon (\lat(R),\delta_{\lat(R)}) \to (\Pro(M),d_{\rk,R})$ is $1$-Lipschitz.
\end{itemize} \end{lem}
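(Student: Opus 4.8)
The three assertions should all follow by unwinding the definitions and pulling back the relevant structure along $\kappa_M$, using the compatibilities already recorded in Remark~\ref{remark:affiliated}. For~$(1)$, the plan is to observe that the $\U(M)$-action on $\Pro(M)$ is conjugation $p \mapsto upu^{\ast}$, while on $\lat(R)$ it is $I \mapsto uI$ (via $\U(M) \leq \GL(R)$, which makes sense since every unitary is invertible in $R = \aff(M)$ with inverse $u^{\ast}$). So I would check that $\kappa_M(upu^{\ast}) = (upu^{\ast})R = u(pR) = u\kappa_M(p)$ for each $u \in \U(M)$ and $p \in \Pro(M)$; the middle equality holds because $u^{\ast}R = R$ (as $u^{\ast} \in \GL(R)$), so $upu^{\ast}R = up R$. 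That is essentially a one-line verification.

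For~$(2)$, the plan is to combine Remark~\ref{remark:affiliated}$(2)$ with the definitions of $\Pro_t(M)$ and $\lat_t(R)$. Explicitly, for $p \in \Pro(M)$ we have $\kappa_M(p) = pR \in \lat_t(R)$ iff $\Delta_{\lat(R)}(pR) = t$, and by Remark~\ref{remark:affiliated}$(2)$ (or directly by the chain $\Delta_{\lat(R)} \circ \kappa_M = \Delta_{\Pro(M)} = {\tr_M}\vert_{\Pro(M)}$ from Remark~\ref{remark:projection.lattice} and Proposition~\ref{proposition:affiliated}) this equals $\tr_M(p)$, so the condition is exactly $p \in \Pro_t(M)$. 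Since $\kappa_M$ is a bijection (Proposition~\ref{proposition:affiliated}), it follows that $\kappa_M(\Pro_t(M)) = \lat_t(R)$.

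For~$(3)$, I would use that $\kappa_M$ is an order isomorphism (Proposition~\ref{proposition:affiliated}), so it carries joins to joins and meets to meets: for $p, q \in \Pro(M)$ one has $\kappa_M(p \vee q) = \kappa_M(p) + \kappa_M(q)$ and $\kappa_M(p \wedge q) = \kappa_M(p) \cap \kappa_M(q)$. Hence $\delta_{\lat(R)}(\kappa_M(p), \kappa_M(q)) = \Delta_{\lat(R)}(\kappa_M(p \vee q)) - \Delta_{\lat(R)}(\kappa_M(p \wedge q)) = \tr_M(p \vee q) - \tr_M(p \wedge q) = \delta_{\Delta_{\Pro(M)}}(p,q)$, using the dimension-function computation of Remark~\ref{remark:projection.lattice} again. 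It then remains to compare the metric $\delta_{\Delta_{\Pro(M)}}$ on $\Pro(M)$ with the trace metric $d_{\tr,M}$ restricted to $\Pro(M)$: here the point is that for projections $p,q$, one has $\tr_M(p \vee q) - \tr_M(p \wedge q) = \tr_M(p) + \tr_M(q) - 2\tr_M(p \wedge q) \geq \tr_M((p-q)^2) = \tr_M((p-q)^{\ast}(p-q)) = d_{\tr,M}(p,q)^2$, the inequality coming from $p \wedge q \leq pq$-type estimates (more precisely $\tr_M(pq) \geq \tr_M(p \wedge q)$, since $pqp \geq$ the range projection onto... — this needs the standard fact that $\tr_M(p \wedge q) \le \tr_M(pqp) = \tr_M(pq)$). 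Combined with the fact that $d_{\rk,R}$ restricted to $\Pro(M)$ is the pullback under $\kappa_M^{-1}$ of... — wait, I should be careful: the target metric in~$(3)$ is $d_{\rk,R}$ on $\Pro(M)$, which by Remark~\ref{remark:affiliated}$(2)$ extends ${\tr_M}$, but $d_{\rk,R}(p,q) = \rk_R(p-q)$, and I would identify $\rk_R(p-q) = \tr_M(\text{range projection of } p - q)$, then show $\tr_M(\text{range proj. of } p-q) \le \delta_{\Delta_{\Pro(M)}}(p,q)$, giving the $1$-Lipschitz bound $d_{\rk,R}(\kappa_M^{-1}(I), \kappa_M^{-1}(J)) \le \delta_{\lat(R)}(I,J)$.

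The main obstacle is entirely in part~$(3)$: parts~$(1)$ and~$(2)$ are bookkeeping, but~$(3)$ requires a genuine projection-lattice estimate relating $\rk_R(p-q)$ to $\Delta_{\lat(R)}(\kappa_M(p) \vee \kappa_M(q)) - \Delta_{\lat(R)}(\kappa_M(p) \wedge \kappa_M(q))$, i.e.\ showing that the range of $p - q$ lies below $(p \vee q) \wedge (p \wedge q)^{\perp}$ (equivalently, $(p-q)$ vanishes on $p \wedge q$ and has range inside $p \vee q$). I expect this to follow from the elementary observations that $(p-q)(p \wedge q) = 0$ (since $p(p \wedge q) = p \wedge q = q(p \wedge q)$) and that $\mathrm{ran}(p-q) \subseteq \mathrm{ran}(p) + \mathrm{ran}(q) = \mathrm{ran}(p \vee q)$, so that the range projection $e$ of $p-q$ satisfies $e \le p \vee q$ and $e \le 1 - (p \wedge q)$, whence $\rk_R(p-q) = \Delta_{\lat(R)}(eR) = \tr_M(e) \le \tr_M(p \vee q) - \tr_M(p \wedge q) = \delta_{\lat(R)}(\kappa_M(p), \kappa_M(q))$, which is exactly the claimed $1$-Lipschitz property of $\kappa_M^{-1}$.
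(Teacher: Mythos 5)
Your proposal is correct and follows essentially the same route as the paper: parts $(1)$ and $(2)$ are the same one-line verifications, and for part $(3)$ both arguments rest on the two facts that $(p-q)(p\wedge q)=0$ and that the range of $p-q$ lies inside that of $p\vee q$, so that $\rk_{R}(p-q)\leq \Delta_{\lat(R)}(\kappa_{M}(p\vee q))-\Delta_{\lat(R)}(\kappa_{M}(p\wedge q))=\delta_{\lat(R)}(I,J)$. The only cosmetic difference is that the paper packages this via the projection $e=(p\vee q)-(p\wedge q)$ and the rank-function axioms (writing $p-q=(p-q)e$ and using $\rk_R(ab)\leq\rk_R(b)$), whereas you invoke the range projection of $p-q$ and the trace; these are interchangeable here.
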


\begin{proof} (1) For all $p \in \Pro(M)$ and $u \in \U(M)$, \begin{displaymath}
	\kappa_{M}(upu^{\ast}) \, = \, upu^{\ast}R \, = \, upR \, = \, u\kappa_{M}(p) .
\end{displaymath} 

(2) This is a direct consequence of Proposition~\ref{proposition:affiliated} and Remark~\ref{remark:affiliated}(2).

(3) Let $I,J \in \lat(R)$. Consider $p \defeq \kappa_{M}^{-1}(I),\, q \defeq \kappa_{M}^{-1}(J) \in \Pro(M)$. Straightforward calculations using Remark~\ref{remark:projection.lattice}\eqref{swap} and the fact that $p\wedge q \leq p\vee q$ show that $e \defeq (p\vee q) - (p \wedge q) \in \Pro(M)$ and $(p\wedge q)e = e(p \wedge q) = 0$. 
Thus, \begin{equation}\tag{$\ast$}\label{difference}
	\rk_{R}(p \vee q) \, = \, \rk_{R}(e + (p\wedge q)) \, = \, \rk_{R}(e) + \rk_{R}(p\wedge q) .
\end{equation} Moreover, \begin{equation}\tag{$\ast\ast$}\label{null}
	(p-q)(p\wedge q) \, = \, p(p\wedge q) - q(p\wedge q) \, = \, (p\wedge q) - (p\wedge q) \, = \, 0 .
\end{equation} We conclude that \begin{align*}
	d_{\rk,R}\left(\kappa_{M}^{-1}(I),\kappa_{M}^{-1}(J)\right) \, &= \, d_{\rk,R}(p,q) \, = \, \rk_{R}(p-q) \, = \, \rk_{R}(p(p\vee q) - q(p\vee q)) \\
		& = \, \rk_{R}((p-q)(p\vee q)) \, = \, \rk_{R}((p-q)(e + (p\wedge q))) \\
		& = \, \rk_{R}((p-q)e + (p-q)(p\wedge q)) \, \stackrel{\eqref{null}}{=} \, \rk_{R}((p-q)e) \\
		& \leq \, \rk_{R}(e) \, \stackrel{\eqref{difference}}{=} \, \rk_{R}(p \vee q) - \rk_{R}(p\wedge q) \\
		& \stackrel{\ref{remark:affiliated}(2)}{=} \, \Delta_{\lat(R)}(\kappa_{M}(p \vee q)) - \Delta_{\lat(R)}(\kappa_{M}(p \wedge q))  \\
		& \stackrel{\ref{proposition:affiliated}}{=} \, \Delta_{\lat(R)}(\kappa_{M}(p) + \kappa_{M}(q)) - \Delta_{\lat(R)}(\kappa_{M}(p)\cap \kappa_{M}(q)) \\
		& = \, \Delta_{\lat(R)}(I + J) - \Delta_{\lat(R)}(I\cap J) \, = \, \delta_{\lat(R)}(I,J) . \qedhere
\end{align*} \end{proof}

Our proof of Lemma~\ref{lemma:amenable} also uses the following well-known inequality.

\begin{lem}\label{lemma:trace.topology} Let $M$ be a von Neumann factor of type $\mathrm{II}_{1}$. For every $a \in M$, \begin{displaymath}
	\tr_{M}(a^{\ast}a) \, \leq \, \Vert a \Vert^{2}\rk_{\aff(M)}(a) .
\end{displaymath} \end{lem}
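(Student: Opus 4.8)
The plan is to use the spectral theorem for the self-adjoint operator $a^{\ast}a \in M$, together with the identification of the rank function on $\aff(M)$ with the trace of the range (support) projection of an operator. First I would recall that, for $a \in M$, the support projection $s(a)$, i.e., the projection onto $\overline{\operatorname{ran} a^{\ast}}$ (equivalently, $1$ minus the projection onto $\ker a$), is equal to $s(a^{\ast}a)$ and satisfies $a^{\ast}a = (a^{\ast}a) s(a)$, so that $a^{\ast}a \le \Vert a^{\ast}a\Vert \, s(a) = \Vert a\Vert^{2} s(a)$ as positive operators in $M$. Applying the trace $\tr_{M}$, which is positive and order preserving, yields $\tr_{M}(a^{\ast}a) \le \Vert a\Vert^{2}\,\tr_{M}(s(a))$.

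The remaining point is to identify $\tr_{M}(s(a))$ with $\rk_{\aff(M)}(a)$. Writing $R \defeq \aff(M)$, one has $aR = s(a)R$ in $\lat(R)$: indeed $a = s(a)a$ gives $aR \subseteq s(a)R$, while regularity of $R$ (Proposition~\ref{proposition:affiliated}) provides $b \in R$ with $aba = a$, whence $s(a) = s(ab)$ and $s(a) = (ab)(\,\cdot\,)$ lies in $aR$, so $s(a)R \subseteq aR$; hence $\rk_{R}(a) = \Delta_{\lat(R)}(aR) = \Delta_{\lat(R)}(s(a)R) = \Delta_{\lat(R)}(\kappa_{M}(s(a)))$, which by Remark~\ref{remark:affiliated}(2) equals $\tr_{M}(s(a))$. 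Combining this with the inequality from the previous paragraph gives $\tr_{M}(a^{\ast}a) \le \Vert a\Vert^{2}\rk_{R}(a)$, as desired. Alternatively, and perhaps more cleanly, one can bypass the ring-theoretic identification by invoking Theorem~\ref{theorem:rank.function} directly: since $\rk_{R}$ is \emph{the} rank function, and $a$ and its support projection $s(a)$ generate the same principal right ideal, $\rk_{R}(a) = \rk_{R}(s(a)) = \Delta_{\lat(R)}(\kappa_M(s(a))) = \tr_M(s(a))$.

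The only genuine subtlety — the "hard part" — is the correct handling of the support projection of an element of $M$: one must be sure that $s(a) \in M$ (it is, since $M$ is a von Neumann algebra, hence closed under taking range projections of its elements, using that $M$ is finite so that $a^{\ast}a$ has a well-defined support in $M$), and that the range projection of $a$ as an operator on Hilbert space coincides with the support computed within the affiliated ring $R$, so that the chain $aR = s(a)R$ is legitimate. Both facts are standard for finite von Neumann algebras, but they deserve an explicit sentence. Everything else is a routine application of positivity and monotonicity of the trace together with the normalization $\Delta_{\Pro(M)} = \tr_M|_{\Pro(M)}$ already recorded in Remark~\ref{remark:projection.lattice}. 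I would keep the written proof to a few lines, citing Remark~\ref{remark:affiliated}(2) for the identification of $\rk_R$ with $\tr_M$ on projections and the spectral inequality $a^{\ast}a \le \Vert a\Vert^{2} s(a)$ as the one computational input.
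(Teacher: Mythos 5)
Your first step is sound: with $s(a)\defeq s(a^{\ast}a)$ the projection onto $(\ker a)^{\perp}=\overline{\operatorname{ran}a^{\ast}}$ one has $a^{\ast}a=s(a)(a^{\ast}a)s(a)\leq\Vert a\Vert^{2}s(a)$, hence $\tr_{M}(a^{\ast}a)\leq\Vert a\Vert^{2}\tr_{M}(s(a))$. The gap is in the identification $\tr_{M}(s(a))=\rk_{\aff(M)}(a)$. You justify it via the identity $a=s(a)a$ and the resulting equality $aR=s(a)R$, but this mixes up the two supports of $a$: the projection onto $\overline{\operatorname{ran}a^{\ast}}$ satisfies $a\,s(a)=a$, \emph{not} $s(a)\,a=a$, and accordingly it generates the principal \emph{left} ideal, $Ra=R\,s(a)$. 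The projection $p$ with $pR=aR$ is the \emph{left} support (range projection) $l(a)=s(a^{\ast})$, the projection onto $\overline{\operatorname{ran}a}$; for non-normal $a$ these differ, and then $aR\neq s(a)R$ (take $a=v$ a partial isometry with $v^{\ast}v\neq vv^{\ast}$: then $s(v)=v^{\ast}v$, and $v\notin v^{\ast}vR$ unless $vv^{\ast}\leq v^{\ast}v$). So the chain $\rk_{R}(a)=\Delta_{\lat(R)}(\kappa_{M}(s(a)))=\tr_{M}(s(a))$ is, as written, unjustified; the same confusion underlies your ``alternative'' second route.

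The identity $\tr_{M}(s(a))=\rk_{R}(a)$ is nevertheless true, and the repair is short. Either (i) note that $s(a)$ and $l(a)$ are Murray--von Neumann equivalent via the partial isometry of the polar decomposition, so $\tr_{M}(s(a))=\tr_{M}(l(a))=\Delta_{\lat(R)}(\kappa_{M}(l(a)))=\rk_{R}(a)$; or (ii) avoid the issue by observing that $s(a)=s(a^{\ast}a)$ is the left support of the self-adjoint element $a^{\ast}a$, so $a^{\ast}aR=s(a)R$ and $\tr_{M}(s(a))=\rk_{R}(a^{\ast}a)\leq\rk_{R}(a)$ by submultiplicativity of the rank, which suffices for the stated inequality; or (iii) do what the paper does: pass to $aa^{\ast}$ by traciality, take $p\in\Pro(M)$ with $pR=aR$ (so $pa=a$ and $\tr_{M}(p)=\rk_{R}(a)$), and estimate $\tr_{M}(a^{\ast}a)=\tr_{M}(aa^{\ast})=\tr_{M}(paa^{\ast}p)\leq\Vert a\Vert^{2}\tr_{M}(p)$. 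Option (iii) is exactly your argument run with the correct support. A minor point: finiteness of $M$ is irrelevant to $s(a)$ lying in $M$; that is just the bicommutant property of a von Neumann algebra.
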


\begin{proof} First, being a positive linear functional on a $C^{\ast}$-algebra, $\tr_{M}$ satisfies \begin{equation}\tag{$1$}\label{murphy}
	\forall x,y \in M \colon \qquad \tr_{M}(y^{\ast} x^{\ast}xy) \, \leq \, \Vert x^{\ast}x \Vert \tr_{M}(y^{\ast}y) 
\end{equation} (see, e.g.,~\cite[Thm.~3.3.7, p.~90]{MurphyBook}). Now, consider $R \defeq \aff(M)$ and let $a \in M$. By Proposition~\ref{proposition:affiliated}, there exists $p \in \Pro(M)$ with $pR = aR$. It follows that \begin{equation}\tag{$2$}\label{ideal}
	a \, = \, pa 
\end{equation} and \begin{equation}\tag{$3$}\label{dimension.vs.trace}
	\rk_{R}(a) \, \stackrel{\ref{theorem:rank.function}}{=} \, \Delta_{\lat(R)} (aR) \, = \, \Delta_{\lat(R)} (pR) \, = \Delta_{\lat(R)}(\kappa_{M}(p)) \, \stackrel{\ref{remark:affiliated}(2)}{=} \, \tr_{M}(p) .
\end{equation} We conclude that \begin{align*}
	\tr_{M}(a^{\ast}a) \, =\, \tr_{M}(aa^{\ast}) \, &\stackrel{\eqref{ideal}}{=} \, \tr_{M}(paa^{\ast}p^{\ast}) \\
	& \stackrel{\eqref{murphy}}{\leq} \, \Vert aa^{\ast} \Vert \tr_{M}(pp^{\ast}) \, = \, \Vert a \Vert^{2}\tr_{M}(p) \, \stackrel{\eqref{dimension.vs.trace}}{=} \, \Vert a \Vert^{2}\rk_{R}(a) . \qedhere
\end{align*} \end{proof}

\begin{lem}\label{lemma:amenable} Let $M$ be a $\mathrm{II}_{1}$ factor, let $R \defeq \aff(M)$, and let $t \in [0,1]$. Furthermore, consider the following conditions: \begin{itemize}
	\item[$(1)$] $\GL(R)_{\rk}$ is amenable.
	\item[$(2)$] $\GL(R) \curvearrowright (\lat_{t}(R),\delta_{\lat(R)})$ is Eymard--Greenleaf amenable
	\item[$(3)$] $\U(M) \curvearrowright (\Pro_{t}(M),d_{\rk,R})$ is Eymard--Greenleaf amenable.
	\item[$(4)$] $\U(M) \curvearrowright (\Pro_{t}(M),d_{\tr,M})$ is Eymard--Greenleaf amenable.
\end{itemize} Then, $(1)\Longrightarrow (2)\Longrightarrow (3)\Longrightarrow (4)$. \end{lem}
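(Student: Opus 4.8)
The plan is to prove the three implications in turn, each by transporting an invariant mean along a suitable equivariant, uniformly continuous operator between the relevant $\UCB$-algebras.

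\emph{For $(1)\Longrightarrow(2)$:} Since $M$ is a $\mathrm{II}_1$ factor, $R=\aff(M)$ is a non-discrete irreducible, continuous ring by Remark~\ref{remark:affiliated}(1),(3). Hence this implication is precisely Lemma~\ref{lemma:skew.amenable} applied to $R$.

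\emph{For $(2)\Longrightarrow(3)$:} By Lemma~\ref{lemma:isometric}(1),(2) the map $\kappa_M$ restricts to a $\U(M)$-equivariant bijection $\Pro_t(M)\to\lat_t(R)$ (equivariant because $\U(M)\leq\GL(R)$ acts by left multiplication, and $u p u^\ast R = upR$), and by Lemma~\ref{lemma:isometric}(3) its inverse $\kappa_M^{-1}\colon(\lat_t(R),\delta_{\lat(R)})\to(\Pro_t(M),d_{\rk,R})$ is $1$-Lipschitz, hence uniformly continuous. Composition with $\kappa_M^{-1}$ therefore yields a unital, positive linear operator $\UCB(\Pro_t(M),d_{\rk,R})\to\UCB(\lat_t(R),\delta_{\lat(R)})$, $f\mapsto f\circ\kappa_M^{-1}$, which is $G$-equivariant for $G=\U(M)$ acting through $\GL(R)$ on the target. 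Pulling back a $\GL(R)$-invariant mean on $\UCB(\lat_t(R))$ — which is in particular $\U(M)$-invariant — along this operator produces a $\U(M)$-invariant mean on $\UCB(\Pro_t(M),d_{\rk,R})$, giving (3).

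\emph{For $(3)\Longrightarrow(4)$:} The two metrics live on the same set $\Pro_t(M)$ with the same $\U(M)$-action, which is isometric for both (Remark~\ref{remark:isometric.action} for $d_{\tr,M}$; Lemma~\ref{lemma:isometric} or Remark~\ref{remark:affiliated}(2) for $d_{\rk,R}$). It suffices to show that the identity map $(\Pro_t(M),d_{\rk,R})\to(\Pro_t(M),d_{\tr,M})$ is uniformly continuous, for then $f\mapsto f\circ\id$ embeds $\UCB(\Pro_t(M),d_{\tr,M})$ unitally, positively, and $\U(M)$-equivariantly into $\UCB(\Pro_t(M),d_{\rk,R})$, and restricting a $\U(M)$-invariant mean from the larger algebra gives one on the smaller. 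Uniform continuity is where Lemma~\ref{lemma:trace.topology} enters: for $p,q\in\Pro(M)$ the element $a\defeq p-q$ lies in $M$ with $\Vert a\Vert\leq 2$ (indeed $\leq 1$ by orthogonality considerations, but $2$ suffices), so $d_{\tr,M}(p,q)^2=\tr_M(a^\ast a)\leq\Vert a\Vert^2\rk_R(a)\leq 4\,d_{\rk,R}(p,q)$, whence $d_{\tr,M}(p,q)\leq 2\sqrt{d_{\rk,R}(p,q)}$. This bound shows the identity is uniformly continuous (in fact $\tfrac12$-Hölder), completing the implication.

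The only genuinely non-routine point is the comparison of the two metrics in $(3)\Longrightarrow(4)$; everything else is the formalism of transporting invariant means along equivariant uniformly continuous maps, and the direction of the comparison is exactly the one Lemma~\ref{lemma:trace.topology} delivers, namely that rank-closeness forces trace-closeness on projections. I would take care only to note that $p-q\in M$ and to record the elementary bound $\Vert p-q\Vert\le 1$ (or simply use $2$) so that the constant is explicit; no deeper estimate is needed.
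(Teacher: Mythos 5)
Your proposal is correct and follows essentially the same route as the paper's proof: $(1)\Rightarrow(2)$ via Lemma~\ref{lemma:skew.amenable} and non-discreteness from Remark~\ref{remark:affiliated}, $(2)\Rightarrow(3)$ by pulling back the mean along $f\mapsto f\circ\kappa_M^{-1}$ using Lemma~\ref{lemma:isometric}, and $(3)\Rightarrow(4)$ via the estimate $d_{\tr,M}(p,q)\leq 2\sqrt{d_{\rk,R}(p,q)}$ from Lemma~\ref{lemma:trace.topology}, which gives the inclusion $\UCB(\Pro_t(M),d_{\tr,M})\subseteq\UCB(\Pro_t(M),d_{\rk,R})$ and allows restriction of the mean. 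The only cosmetic difference is your aside that $\Vert p-q\Vert\leq 1$; the paper simply uses the bound $2$, exactly as you ultimately do.
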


\begin{proof} (1)$\Longrightarrow$(2). Since $R$ is non-discrete by Remark~\ref{remark:affiliated}(3), amenability of the topological group $\GL(R)_{\rk}$ implies Eymard--Greenleaf amenability of the action of $\GL(R)$ on $(\lat_{t}(R),\delta_{\lat(R)})$ due to Lemma~\ref{lemma:skew.amenable}
	
(2)$\Longrightarrow$(3). Suppose that the action of $\GL(R)$ on $(\lat_{t}(R),\delta_{\lat(R)})$ is Eymard--Greenleaf amenable, i.e., there is a $\GL(R)$-invariant mean \begin{displaymath}
	\mu \colon \, \UCB(\lat_{t}(R),\delta_{\lat(R)}) \, \longrightarrow \, \R .
\end{displaymath} In particular, $\mu$ is $\U(M)$-invariant. By Lemma~\ref{lemma:isometric}, \begin{displaymath}
	\UCB(\Pro_{t}(M),d_{\rk,R}) \, \longrightarrow \, \UCB(\lat_{t}(R),\delta_{\lat(R)}), \quad f \, \longmapsto \, f \circ \kappa_{M}^{-1}
\end{displaymath} is a well-defined, $\U(M)$-equivariant, positive, unital, linear operator, thus \begin{displaymath}
	\UCB(\Pro_{t}(M),d_{\rk,R}) \, \longrightarrow \, \R, \quad f \, \longmapsto \, \mu\left( f \circ \kappa_{M}^{-1} \right)
\end{displaymath} constitutes a well-defined $\U(M)$-invariant mean. Hence, the action of $\U(M)$ on $(\Pro_{t}(M),d_{\rk,R})$ is Eymard--Greenleaf amenable.

(3)$\Longrightarrow$(4). Since \begin{align*}
	d_{\tr, M}(p,q) \, &= \, \sqrt{\tr_{M}((p-q)^{\ast}(p-q))} \, \stackrel{\ref{lemma:trace.topology}}{\leq} \, \Vert p-q \Vert \sqrt{\rk_{R}(p-q)} \\
	& \leq \, 2 \sqrt{\rk_{R}(p-q)} \, = \, 2 \sqrt{d_{\rk, R}(p,q)}
\end{align*} for all $p,q \in P(M)$, we see that $\UCB(\Pro_{t}(M),d_{\tr,M}) \subseteq \UCB(\Pro_{t}(M),d_{\rk,R})$. Thus, via restriction, any $\U(M)$-invariant mean on $\UCB(\Pro_{t}(M),d_{\rk,R})$ gives rise to a $\U(M)$-invariant mean on $\UCB(\Pro_{t}(M),d_{\tr,M})$.
\end{proof}

\section{Group von Neumann algebras and inner amenability}\label{section:groups}

In this section we prove that a non-trivial ICC group acting in an amenable fashion (in the sense of Eymard--Greenleaf) on the space of projections of trace $t$ of its group von Neumann algebra for some $t \in (0,1)$ must be inner amenable (Theorem~\ref{theorem:first}). Combining this with Lemma~\ref{lemma:amenable}, we deduce that, if the unit group of the ring affiliated with the group von Neumann algebra of a non-trivial ICC group $G$ is amenable with respect to the rank topology, then $G$ must be inner amenable (Theorem~\ref{theorem:second}). This way, we produce examples of non-discrete irreducible, continuous rings whose unit groups are non-amenable with respect to the rank topology (Corollary~\ref{corollary:main}).

Let us recall the definition of a group von Neumann algebra. For background on this construction, the reader is referred to~\cite[7, \S43 $+$ \S53]{ConwayBook}. Let $G$ be a group and consider the complex Hilbert space $\ell^{2}(G) = \ell^{2}(G,\C)$ densely spanned by the standard orthonormal basis $(b_{g})_{g \in G}$ defined by \begin{displaymath}
	b_{g}(x) \, \defeq \, \begin{cases}
								\, 1 & \text{if } x=g, \\ 
								\, 0 & \text{otherwise}
							\end{cases} \qquad (g,x \in G).
\end{displaymath} As usual, the \emph{left regular representation} $\lambda_{G} \colon G \to \U(\B(\ell^{2}(G)))$ and the \emph{right regular representation} $\rho_{G} \colon G \to \U(\B(\ell^{2}(G)))$ are given by \begin{displaymath}
	\lambda_{G}(g)(f)(h) \, \defeq \, f(g^{-1}h), \qquad \rho_{G}(g)(f)(h) \, \defeq \, f(hg) \qquad \left(g,h \in G, \, f \in \ell^{2}(G)\right) ,
\end{displaymath} and the \emph{adjoint representation}~\cite{effros} is defined as \begin{displaymath}
	\alpha_{G} \colon \, G \, \longrightarrow \, \U(\B(\ell^{2}(G))), \quad g \, \longmapsto \, \lambda_{G}(g)\rho_{G}(g) = \rho_{G}(g)\lambda_{G}(g) .
\end{displaymath} The \emph{group von Neumann algebra} of $G$ is defined as the bicommutant \begin{displaymath}
	\vN(G) \, \defeq \, \lambda_{G}(G)'' \, \subseteq \, \B\left(\ell^{2}(G)\right)
\end{displaymath} and comes along equipped with the faithful, normal, tracial state \begin{displaymath}
	\tr_{\vN(G)} \colon \, \vN(G) \, \longrightarrow \, \C, \quad a \, \longmapsto \, \langle a(b_{e}), b_{e} \rangle .
\end{displaymath} Furthermore, let us consider the $\alpha_{G}$-invariant closed linear subspace \begin{displaymath}
	\left. \ell^{2}_{0}(G) \, \defeq \, \{ b_{e} \}^{\perp} \, = \, \left\{ f \in \ell^{2}(G) \, \right\vert \langle f,b_{e} \rangle = 0 \right\} \, = \, \left. \left\{ f \in \ell^{2}(G) \, \right\vert f(e) = 0 \right\} .
\end{displaymath} 

\begin{remark}\label{remark:icc} (1) A group $G$ is said to have the \emph{infinite conjugacy class property}, or to be an \emph{ICC group}, if the conjugacy class of every element of $G\setminus \{ e \}$ is infinite. It is well known (see, e.g.,~\cite[Thm.~43.13, p.~249]{ConwayBook}) that a group $G$ has the infinite conjugacy class property if and only if $\vN(G)$ is a factor. Moreover, if $G$ is a non-trivial ICC group, then the factor $\vN(G)$ is of type $\mathrm{II}_{1}$ (cf.~\cite[Thm.~53.1, p.~301]{ConwayBook}).

(2) Let $G$ be a non-trivial ICC group. Since $G \to \U(\vN(G)), \, g \mapsto \lambda_{G}(g)$ is a homomorphism, Remark~\ref{remark:isometric.action} entails that \begin{displaymath}
	G \times \Pro(\vN(G)) \, \longrightarrow \, \Pro(\vN(G)) , \quad (g,p) \, \longmapsto \, \lambda_{G}(g)p\lambda_{G}(g)^{\ast}
\end{displaymath} is an isometric action of $G$ on $(\Pro(\vN(G)),d_{\tr,\vN(G)})$, which leaves each of the sets $\Pro_{t}(\vN(G))$ ($t \in [0,1]$) invariant. Henceforth, this action will be referred to as the \emph{natural} action of $G$ on $\Pro(\vN(G))$ (or $\Pro_{t}(\vN(G))$, for $t \in [0,1]$, resp.). \end{remark}

For a Hilbert space $H$, we consider its unit sphere $\Sph_{H} \defeq \{ x \in H \mid \Vert x \Vert = 1 \}$ equipped with the induced metric $\Sph_{H} \times \Sph_{H} \to \R, \, (x,y) \mapsto \Vert x-y \Vert$.

\begin{lem}\label{lemma:map} Let $G$ be a non-trivial ICC group and let $t \in (0,1)$. Then the map \begin{displaymath}
	\phi_{G,t} \colon \, \Pro_{t}(\vN(G)) \, \longrightarrow \, \Sph_{\ell^{2}_{0}(G)}, \quad p \, \longmapsto \, \tfrac{1}{\sqrt{t-t^{2}}}(p(b_{e})-t b_{e})
\end{displaymath} is $\tfrac{1}{\sqrt{t-t^{2}}}$-Lipschitz with respect to the trace metric on $\Pro_{t}(\vN(G))$. Furthermore, \begin{displaymath}
	\phi_{G,t}(\lambda_{G}(g)p\lambda_{G}(g)^{\ast}) \, = \, \alpha_{G}(g)(\phi_{G,t}(p))
\end{displaymath} for all $p \in \Pro_{t}(\vN(G))$ and $g \in G$. \end{lem}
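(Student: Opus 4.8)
The plan is to verify the two assertions by direct computation with the Hilbert--Schmidt inner product induced by $\tr_{\vN(G)}$ on $\vN(G)$, using the identification of $d_{\tr,\vN(G)}$ with the $\ell^2$-norm distance in $\ell^2(G)$ after applying operators to the cyclic vector $b_e$. First I would record the three basic identities for $p \in \Pro_t(\vN(G))$: (i) $p(b_e) - t b_e \in \ell^2_0(G)$, which holds because $\langle p(b_e) - tb_e, b_e\rangle = \tr_{\vN(G)}(p) - t = 0$; (ii) $\Vert p(b_e) - tb_e\Vert^2 = \langle p(b_e),p(b_e)\rangle - 2t\langle p(b_e),b_e\rangle + t^2 = \tr_{\vN(G)}(p^*p) - 2t\,\tr_{\vN(G)}(p) + t^2 = t - 2t^2 + t^2 = t - t^2$, using $p^*p = p$ and $\tr_{\vN(G)}(p) = t$; so $\phi_{G,t}(p)$ indeed lands in $\Sph_{\ell^2_0(G)}$ and $\phi_{G,t}$ is well defined.

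For the Lipschitz estimate, I would take $p,q \in \Pro_t(\vN(G))$ and compute
\begin{displaymath}
	\Vert \phi_{G,t}(p) - \phi_{G,t}(q)\Vert \, = \, \tfrac{1}{\sqrt{t-t^2}}\Vert (p-q)(b_e)\Vert \, = \, \tfrac{1}{\sqrt{t-t^2}}\sqrt{\tr_{\vN(G)}\big((p-q)^*(p-q)\big)} \, = \, \tfrac{1}{\sqrt{t-t^2}}\, d_{\tr,\vN(G)}(p,q),
\end{displaymath}
where the second equality uses that $b_e$ is the GNS cyclic vector implementing $\tr_{\vN(G)}$, i.e. $\Vert a(b_e)\Vert^2 = \langle a^*a(b_e),b_e\rangle = \tr_{\vN(G)}(a^*a)$ for $a = p-q \in \vN(G)$. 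This gives the claimed $\tfrac{1}{\sqrt{t-t^2}}$-Lipschitz bound (in fact with equality).

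For the equivariance, I would unwind both sides on $b_e$. Writing $u_g \defeq \lambda_G(g)$, note $u_g^*(b_e) = \lambda_G(g)^*(b_e) = \lambda_G(g^{-1})(b_e) = b_{g^{-1}}$, while $\rho_G(g)(b_e) = b_{g^{-1}}$ as well, so $u_g^*(b_e) = \rho_G(g)(b_e)$; hence $p u_g^*(b_e) = p\rho_G(g)(b_e) = \rho_G(g)p(b_e)$ since $p \in \vN(G) = \lambda_G(G)''$ commutes with $\rho_G(g) \in \lambda_G(G)'$. Therefore
\begin{displaymath}
	(u_g p u_g^*)(b_e) \, = \, u_g \rho_G(g) p(b_e) \, = \, \lambda_G(g)\rho_G(g)\,p(b_e) \, = \, \alpha_G(g)\big(p(b_e)\big),
\end{displaymath}
and also $(u_g p u_g^*)(t b_e) = t\,\alpha_G(g)(b_e)$ by the same computation applied with $p$ replaced by $t\cdot\mathrm{id}$ (equivalently, $\alpha_G(g)(b_e) = \lambda_G(g)\rho_G(g)(b_e) = \lambda_G(g)(b_{g^{-1}}) = b_e$, so this term is just $t b_e$). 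Subtracting and dividing by $\sqrt{t-t^2}$ yields $\phi_{G,t}(u_g p u_g^*) = \alpha_G(g)(\phi_{G,t}(p))$, as $\alpha_G(g)$ is linear.

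The main (very mild) obstacle is simply bookkeeping the adjoints and the commutation $p\rho_G(g) = \rho_G(g)p$ correctly; everything else is the standard GNS dictionary between $d_{\tr,\vN(G)}$ on $\vN(G)$ and the Hilbert norm on $\ell^2(G)$ via the cyclic vector $b_e$, together with $\rho_G(G) \subseteq \vN(G)' $.
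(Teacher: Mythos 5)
Your proposal is correct and follows essentially the same route as the paper's proof: the same well-definedness computation via $\langle p(b_e)-tb_e,b_e\rangle=0$ and $\Vert p(b_e)-tb_e\Vert^2=t-t^2$, the same GNS identity $\Vert a(b_e)\Vert^2=\tr_{\vN(G)}(a^*a)$ for the Lipschitz bound, and the same commutation $p\rho_G(g)=\rho_G(g)p$ together with $\alpha_G(g)(b_e)=b_e$ for the equivariance. No gaps.
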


\begin{proof} First of all, we need to show that $\phi_{G,t}$ is well defined. To this end, let $p \in \Pro_{t}(\vN(G))$. Evidently, $t-t^{2} > 0$ as $t \in (0,1)$. Moreover, \begin{displaymath}
	\langle \phi_{G,t}(p),b_{e} \rangle \, = \, \tfrac{1}{\sqrt{t-t^{2}}}\left(\langle p(b_{e}),b_{e} \rangle - t\langle b_{e},b_{e} \rangle \right) \, = \, \tfrac{1}{\sqrt{t-t^{2}}}(\tr_{\vN(G)}(p) - t) \, = \, 0 ,
\end{displaymath} hence $\phi_{G,t}(p) \in \ell^{2}_{0}(G)$. Since $p \in \Pro(\vN(G))$, we infer that \begin{align*}
	\Vert \phi_{G,t}(p) \Vert_{2} \, & = \, \tfrac{1}{\sqrt{t-t^{2}}} \sqrt{\langle p(b_{e}), p(b_{e})\rangle + \langle tb_{e},tb_{e} \rangle - 2 \Re \langle p(b_{e}),tb_{e} \rangle} \\
	& = \, \tfrac{1}{\sqrt{t-t^{2}}} \sqrt{\langle p(b_{e}), b_{e}\rangle + t^{2}\langle b_{e},b_{e} \rangle - 2 t \Re \langle p(b_{e}),b_{e} \rangle} \\
		& = \, \tfrac{1}{\sqrt{t-t^{2}}} \sqrt{\tr_{\vN(G)}(p) + t^{2} - 2 t \Re \tr_{\vN(G)}(p)} \, = \, \tfrac{\sqrt{t - t^{2}}}{\sqrt{t-t^{2}}} \, = \, 1 ,
\end{align*} i.e., $\phi_{G,t}(p) \in \Sph_{\ell^{2}_{0}(G)}$. This shows that the map $\phi_{G,t}$ is well defined. Concerning Lipschitz continuity, we observe that \begin{align*}
	\Vert \phi_{G,t}(p) - \phi_{G,t}(q) \Vert_{2} \, & = \, \tfrac{1}{\sqrt{t-t^{2}}} \Vert (p-q)(b_{e}) \Vert_{2} \, = \, \tfrac{1}{\sqrt{t-t^{2}}} \sqrt{\langle (p-q)(b_{e}),(p-q)(b_{e}) \rangle} \\
	& = \, \tfrac{1}{\sqrt{t-t^{2}}} \sqrt{\langle (p-q)^{\ast}(p-q)(b_{e}),b_{e} \rangle} \\
	& = \, \tfrac{1}{\sqrt{t-t^{2}}} \sqrt{\tr_{\vN(G)}((p-q)^{\ast}(p-q))} \, = \, \tfrac{1}{\sqrt{t-t^{2}}}d_{\tr,\vN(G)}(p,q)
\end{align*} for all $p,q \in \Pro_{t}(\vN(G))$. Finally, since $\rho_{G}(G) \subseteq \lambda_{G}(G)' = \lambda_{G}(G)''' = \vN(G)'$, we see that, for all $p \in \Pro_{t}(\vN(G))$ and $g \in G$, \begin{align*}
	\phi_{G,t}(\lambda_{G}(g)p\lambda_{G}(g)^{\ast}) \, &= \, \tfrac{1}{\sqrt{t-t^{2}}}\left((\lambda_{G}(g)p\lambda_{G}(g)^{\ast})(b_{e})- tb_{e}\right) \\
	& = \, \tfrac{1}{\sqrt{t-t^{2}}}\left((\lambda_{G}(g)p)\left(b_{g^{-1}}\right)- tb_{e}\right) \\
	& = \, \tfrac{1}{\sqrt{t-t^{2}}}\left((\lambda_{G}(g)p\rho_{G}(g))(b_{e})- tb_{e}\right) \\
	& \stackrel{\rho_{G}(g) \in \vN(G)'}{=} \, \tfrac{1}{\sqrt{t-t^{2}}}\left((\lambda_{G}(g)\rho_{G}(g)p)(b_{e})- tb_{e}\right) \\
	& = \, \tfrac{1}{\sqrt{t-t^{2}}}\left((\alpha_{G}(g)p)(b_{e})- tb_{e}\right) \\
	& = \, \tfrac{1}{\sqrt{t-t^{2}}}\left((\alpha_{G}(g)p)(b_{e})- t\alpha_{G}(g)(b_{e})\right) \\
	& = \, \alpha_{G}(g)\left( \tfrac{1}{\sqrt{t-t^{2}}}(p(b_{e})- t b_{e}) \right) \, = \, \alpha_{G}(g)(\phi_{G,t}(p)) . \qedhere
\end{align*} \end{proof}

Before elaborating on consequences of Lemma~\ref{lemma:map}, let us recall another basic fact (Lemma~\ref{lemma:map.2}). To clarify some relevant notation, let $X$ be a set. Then $\Sym(X)$ denotes the full symmetric group over $X$, which consists of all bijections from $X$ to itself. Furthermore, let us equip the set \begin{displaymath}
	\left. \Prob(X) \, \defeq \, \left\{ f \in \ell^{1}(X,\R) \, \right\vert \Vert f \Vert_{1} = 1, \, f \geq 0 \right\} 
\end{displaymath} with the metric \begin{displaymath}
	\Prob(X) \times \Prob(X) \, \longrightarrow \, \R, \quad (f,g) \, \longmapsto \, \Vert f-g \Vert_{1} .
\end{displaymath}

\begin{remark}\label{remark:isometry} Let $G$ be a group and let $X \defeq G\setminus \{ e \}$. Consider the homomorphism $\gamma_{G} \colon G \to \Sym(X)$ given by \begin{displaymath}
	\gamma_{G}(g)(x) \, \defeq \, gxg^{-1} \qquad (g \in G, \, x \in X) .
\end{displaymath} Note that $\ell^{2}_{0}(G) \to \ell^{2}(X), \, f \mapsto f\vert_{X}$ is an isometric linear isomorphism, and \begin{displaymath}
	\forall g \in G \ \forall f \in \ell^{2}_{0}(G) \colon \quad \alpha_{G}(g)(f)\vert_{X} \, = \, (f\vert_{X}) \circ \gamma_{G}\left(g^{-1}\right) .
\end{displaymath} \end{remark}

The following lemma is essentially known in the theory of Banach spaces: the map discussed in Lemma~\ref{lemma:map.2} is a close relative of the \emph{Mazur map}, which serves as a uniform isomorphism between the unit spheres of the $\ell^{p}$-spaces for $1 \leq p < \infty$ (see~\cite[9.1, pp.~197--199]{BenyaminiLindenstrauss} for details). We include the short argument for the sake of convenience.

\begin{lem}\label{lemma:map.2} Let $X$ be a set. Then \begin{displaymath}
	\psi_{X} \colon \, \Sph_{\ell^{2}(X)} \, \longrightarrow \, \Prob (X), \quad f \, \longmapsto \, \vert f \vert^{2}
\end{displaymath} is $2$-Lipschitz. Also, $\psi_{X}(f \circ \sigma) = \psi_{X}(f) \circ \sigma$ for all $f \in \Sph_{\ell^{2}(X)}$ and $\sigma \in \Sym(X)$. \end{lem}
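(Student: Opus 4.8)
The plan is to verify the two asserted properties of $\psi_X$ separately, both by direct computation. The equivariance statement $\psi_X(f \circ \sigma) = \psi_X(f) \circ \sigma$ is immediate: for any $f \in \Sph_{\ell^2(X)}$ and $\sigma \in \Sym(X)$ one has $\psi_X(f \circ \sigma)(x) = |f(\sigma(x))|^2 = (\psi_X(f))(\sigma(x)) = (\psi_X(f) \circ \sigma)(x)$ for every $x \in X$, and precomposition with a bijection preserves both the $\ell^1$-norm and positivity, so $f \circ \sigma$ again lies in $\Sph_{\ell^2(X)}$ and $\psi_X(f) \circ \sigma \in \Prob(X)$; there is nothing to belabour here.

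For the Lipschitz bound, first I would check that $\psi_X$ actually maps into $\Prob(X)$: if $\|f\|_2 = 1$ then $\||f|^2\|_1 = \sum_{x \in X} |f(x)|^2 = \|f\|_2^2 = 1$ and clearly $|f|^2 \geq 0$. The core estimate is then the pointwise inequality
\begin{displaymath}
	\bigl\vert\, \vert f(x)\vert^{2} - \vert h(x)\vert^{2} \,\bigr\vert \, = \, \bigl\vert\, \vert f(x)\vert - \vert h(x)\vert \,\bigr\vert \cdot \bigl(\vert f(x)\vert + \vert h(x)\vert\bigr) \, \leq \, \vert f(x) - h(x)\vert \cdot \bigl(\vert f(x)\vert + \vert h(x)\vert\bigr) ,
\end{displaymath}
using the reverse triangle inequality $\bigl\vert \vert f(x)\vert - \vert h(x)\vert \bigr\vert \leq \vert f(x) - h(x)\vert$. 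Summing over $x \in X$ and applying the Cauchy--Schwarz inequality to the two sequences $(|f(x) - h(x)|)_{x}$ and $(|f(x)| + |h(x)|)_{x}$ gives
\begin{displaymath}
	\Vert \psi_{X}(f) - \psi_{X}(h) \Vert_{1} \, \leq \, \Vert f - h \Vert_{2} \cdot \bigl\Vert\, \vert f\vert + \vert h\vert \,\bigr\Vert_{2} \, \leq \, \Vert f - h \Vert_{2} \bigl( \Vert f \Vert_{2} + \Vert h \Vert_{2} \bigr) \, = \, 2 \Vert f - h \Vert_{2} ,
\end{displaymath}
where the last step uses the triangle inequality in $\ell^2(X)$ together with $\|f\|_2 = \|h\|_2 = 1$. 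This is exactly the $2$-Lipschitz bound.

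I do not anticipate any genuine obstacle: the only mildly delicate point is being careful that all sums are over the (possibly uncountable) index set $X$ and that every quantity in sight is finite, which is guaranteed since $f, h \in \ell^2(X)$ forces $|f| + |h| \in \ell^2(X)$ and hence, by Cauchy--Schwarz, $|f(x) - h(x)|(|f(x)| + |h(x)|)$ is summable. One should also note in passing that $f$ ranges over complex-valued functions while $\psi_X(f)$ is real-valued and nonnegative, which is why taking absolute values before squaring is the natural operation; with that understood, the argument above goes through verbatim.
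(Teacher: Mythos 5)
Your proof is correct and follows essentially the same route as the paper's: factor $\bigl\vert\,\vert f(x)\vert^{2}-\vert h(x)\vert^{2}\,\bigr\vert$ as a product, apply the reverse triangle inequality pointwise, then Cauchy--Schwarz and the triangle inequality in $\ell^{2}(X)$ to get the constant $2$, with the equivariance being a one-line pointwise check. No gaps.
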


\begin{proof} First of all, let us note that $\psi_{X}$ is well defined: indeed, if $f \in \Sph_{\ell^{2}(X)}$, then $\vert f \vert^{2}(x) = \vert f(x) \vert^{2} \in \R_{\geq 0}$ for every $x \in X$ and also $\sum_{x \in X} \vert f\vert^{2}(x) = \Vert f \Vert_{2}^{2} = 1$, wherefore $\vert f \vert^{2} \in \Prob(X)$. Furthermore, by the Cauchy--Schwarz inequality, \begin{align*}
	\Vert \psi_{X}(f) - \psi_{X}(g) \Vert_{1} \, & = \, \left\lVert \vert f \vert^{2} - \vert g \vert^{2} \right\rVert_{1} \, = \, \sum\nolimits_{x \in X} \left\lvert \vert f(x)\vert^{2} - \vert g(x) \vert^{2} \right\rvert \\
	& = \, \sum\nolimits_{x \in X} \vert \, \vert f(x) \vert + \vert g(x)\vert \, \vert \cdot \vert \, \vert f(x)\vert - \vert g(x)\vert \, \vert \\
	& \leq \, \sum\nolimits_{x \in X} ( \vert f(x) \vert + \vert g(x)\vert) \vert f(x) - g(x) \vert \\
	& = \, \lvert \langle \vert f\vert + \vert g \vert ,\vert f - g \vert \rangle \rvert \, \leq \, \Vert \, \vert f \vert + \vert g \vert \, \Vert_{2}\cdot \Vert f - g \Vert_{2} \, \leq \, 2 \Vert f-g \Vert_{2}
\end{align*} for all $f,g \in \Sph_{\ell^{2}(X)}$. Finally, if $f \in \Sph_{\ell^{2}(X)}$ and $\sigma \in \Sym(X)$, then \begin{displaymath}
	\psi_{X}(f \circ \sigma) \, = \, \vert f \circ \sigma \vert^{2} \, = \, \vert f \vert^{2} \circ \sigma \, = \, \psi_{X}(f) \circ \sigma . \qedhere
\end{displaymath} \end{proof}

Now we turn back to the map devised in Lemma~\ref{lemma:map}.

\begin{lem}\label{lemma:map.3} Let $G$ be a non-trivial ICC group, let $X \defeq G\setminus \{ e\}$ and let $t \in (0,1)$. Then \begin{displaymath}
	\xi_{G,t} \colon \, \Pro_{t}(\vN(G)) \, \longrightarrow \, \Prob (X), \quad p \, \longmapsto \, \psi_{X}(\phi_{G,t}(p)\vert_{X})
\end{displaymath} is $\tfrac{2}{\sqrt{t-t^{2}}}$-Lipschitz with respect to the trace metric on $\Pro_{t}(\vN(G))$. Furthermore, \begin{displaymath}
	\xi_{G,t}(\lambda_{G}(g)p\lambda_{G}(g)^{\ast}) \, = \, \xi_{G,t}(p) \circ \gamma_{G}\left(g^{-1}\right)
\end{displaymath} for all $p \in \Pro_{t}(\vN(G))$ and $g \in G$. \end{lem}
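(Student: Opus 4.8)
The plan is to realize $\xi_{G,t}$ as a composite of three maps whose relevant properties have already been recorded, so that the assertion follows by chaining Lipschitz constants and equivariance relations. Concretely, write $\xi_{G,t} = \psi_{X} \circ r \circ \phi_{G,t}$, where $r \colon \ell^{2}_{0}(G) \to \ell^{2}(X)$, $f \mapsto f\vert_{X}$ is the isometric linear isomorphism from Remark~\ref{remark:isometry}, $\phi_{G,t}$ is the map of Lemma~\ref{lemma:map}, and $\psi_{X}$ is the map of Lemma~\ref{lemma:map.2}.

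Well-definedness of $\xi_{G,t}$ is then immediate: by Lemma~\ref{lemma:map} we have $\phi_{G,t}(p) \in \Sph_{\ell^{2}_{0}(G)}$ for every $p \in \Pro_{t}(\vN(G))$, hence $r(\phi_{G,t}(p)) \in \Sph_{\ell^{2}(X)}$ since $r$ is an isometric isomorphism, and therefore $\psi_{X}(r(\phi_{G,t}(p))) \in \Prob(X)$ by Lemma~\ref{lemma:map.2}. For the Lipschitz estimate, observe that $r$ is an isometry, so by Lemma~\ref{lemma:map} the composite $r \circ \phi_{G,t}$ is $\tfrac{1}{\sqrt{t-t^{2}}}$-Lipschitz from $(\Pro_{t}(\vN(G)),d_{\tr,\vN(G)})$ to $\Sph_{\ell^{2}(X)}$; composing with the $2$-Lipschitz map $\psi_{X}$ of Lemma~\ref{lemma:map.2} shows that $\xi_{G,t}$ is $\tfrac{2}{\sqrt{t-t^{2}}}$-Lipschitz.

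For the equivariance statement, fix $p \in \Pro_{t}(\vN(G))$ and $g \in G$. By Lemma~\ref{lemma:map}, $\phi_{G,t}(\lambda_{G}(g)p\lambda_{G}(g)^{\ast}) = \alpha_{G}(g)(\phi_{G,t}(p))$; applying $r$ and using the intertwining identity $r(\alpha_{G}(g)(f)) = r(f) \circ \gamma_{G}(g^{-1})$ from Remark~\ref{remark:isometry} yields $r(\phi_{G,t}(\lambda_{G}(g)p\lambda_{G}(g)^{\ast})) = r(\phi_{G,t}(p)) \circ \gamma_{G}(g^{-1})$; finally, applying $\psi_{X}$ and invoking $\psi_{X}(f \circ \sigma) = \psi_{X}(f) \circ \sigma$ from Lemma~\ref{lemma:map.2} with $\sigma = \gamma_{G}(g^{-1})$ gives $\xi_{G,t}(\lambda_{G}(g)p\lambda_{G}(g)^{\ast}) = \xi_{G,t}(p) \circ \gamma_{G}(g^{-1})$.

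I do not expect any genuine obstacle here: the entire content is packaged in Lemmas~\ref{lemma:map} and~\ref{lemma:map.2} together with Remark~\ref{remark:isometry}, and the only point requiring (minimal) care is that the restriction map $r$ is \emph{simultaneously} an isometry and an intertwiner between the $\alpha_{G}$-action on $\ell^{2}_{0}(G)$ and the $\gamma_{G}$-action on $\ell^{2}(X)$ — precisely the two facts recorded in Remark~\ref{remark:isometry}.
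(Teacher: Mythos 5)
Your proposal is correct and follows essentially the same route as the paper: both arguments factor $\xi_{G,t}$ through the restriction map and chain the well-definedness, Lipschitz constants, and equivariance identities from Lemma~\ref{lemma:map}, Remark~\ref{remark:isometry}, and Lemma~\ref{lemma:map.2}. The only cosmetic difference is that you name the restriction map $r$ explicitly, whereas the paper carries the notation $\cdot\vert_{X}$ through the same inline computations.
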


\begin{proof} Thanks to Lemma~\ref{lemma:map}, Remark~\ref{remark:isometry} and Lemma~\ref{lemma:map.2}, the map $\xi_{G,t}$ is well defined. We also see that \begin{align*}
	\Vert \xi_{G,t}(p) - \xi_{G,t}(q) \Vert_{1} \, &= \, \Vert \psi_{X}(\phi_{G,t}(p)\vert_{X}) - \psi_{X}(\phi_{G,t}(q)\vert_{X}) \Vert_{1} \\
	& \stackrel{\ref{lemma:map.2}}{\leq} \, 2\Vert \phi_{G,t}(p)\vert_{X} - \phi_{G,t}(q)\vert_{X} \Vert_{2} \, \stackrel{\ref{remark:isometry}}{=} \, 2\Vert \phi_{G,t}(p) - \phi_{G,t}(q) \Vert_{2} \\
	& \stackrel{\ref{lemma:map}}{\leq} \, \tfrac{2}{\sqrt{t-t^{2}}}d_{\tr,\vN(G)}(p,q)
\end{align*} for all $p,q \in \Pro_{t}(\vN(G))$, that is, $\xi_{G,t}$ is $\tfrac{2}{\sqrt{t-t^{2}}}$-Lipschitz with respect to $d_{\tr,\vN(G)}$. Moreover, for all $g \in G$ and $p \in \Pro_{t}(\vN(G))$, \begin{align*}
	\xi_{G,t}(\lambda_{G}(g)p&\lambda_{G}(g)^{\ast}) \, = \, \psi_{X}(\phi_{G,t}(\lambda_{G}(g)p\lambda_{G}(g)^{\ast})\vert_{X}) \, \stackrel{\ref{lemma:map}}{=} \, \psi_{X}(\alpha_{G}(g)(\phi_{G,t}(p))\vert_{X}) \\
	& \stackrel{\ref{remark:isometry}}{=} \, \psi_{X}\left((\phi_{G,t}(p)\vert_{X}) \circ \gamma_{G}\left(g^{-1}\right)\right) \, \stackrel{\ref{lemma:map.2}}{=} \, \psi_{X}(\phi_{G,t}(p)\vert_{X}) \circ \gamma_{G}\left(g^{-1}\right) \\
	&= \, \xi_{G,t}(p) \circ \gamma_{G}\left(g^{-1}\right) .\qedhere
\end{align*} \end{proof}

\begin{lem}\label{lemma:map.4} Let $G$ be a non-trivial ICC group, let $X \defeq G\setminus \{ e\}$ and let $t \in (0,1)$. Then \begin{displaymath}
	\Xi_{G,t} \colon \, \ell^{\infty}(X,\R) \, \longrightarrow \, \UCB(\Pro_{t}(\vN(G)),d_{\tr,\vN(G)})
\end{displaymath} given by \begin{displaymath}
	\Xi_{G,t}(f)(p) \, \defeq \, \sum\nolimits_{x \in X}f(x)\xi_{G,t}(p)(x) \qquad \left(f \in \ell^{\infty}(X,\R), \, p \in \Pro_{t}(\vN(G))\right)
\end{displaymath} is a positive, unital, linear operator. Furthermore, \begin{displaymath}
	\Xi_{G,t}(f \circ {\gamma_{G}(g)})(p) \, = \, \Xi_{G,t}(f)(\lambda_{G}(g)p\lambda_{G}(g)^{\ast})
\end{displaymath} for all $f \in \ell^{\infty}(X,\R)$, $g \in G$ and $p \in \Pro_{t}(\vN(G))$. \end{lem}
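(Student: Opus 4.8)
The plan is to verify the three claimed properties of $\Xi_{G,t}$ in turn: well-definedness (i.e.\ that the sum defines a genuine element of $\UCB(\Pro_{t}(\vN(G)),d_{\tr,\vN(G)})$), linearity together with positivity and unitality, and finally the equivariance identity. Throughout, the key structural input is Lemma~\ref{lemma:map.3}: for each $p$ one has $\xi_{G,t}(p) \in \Prob(X) \subseteq \ell^{1}(X,\R)$, the map $p \mapsto \xi_{G,t}(p)$ is $\tfrac{2}{\sqrt{t-t^{2}}}$-Lipschitz from $(\Pro_{t}(\vN(G)),d_{\tr,\vN(G)})$ to $(\Prob(X),\Vert\cdot\Vert_{1})$, and it intertwines the natural action with precomposition by $\gamma_{G}(g^{-1})$.

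First I would check well-definedness. For fixed $f \in \ell^{\infty}(X,\R)$ and $p \in \Pro_{t}(\vN(G))$, the series $\sum_{x\in X} f(x)\xi_{G,t}(p)(x)$ converges absolutely because $\xi_{G,t}(p) \in \ell^{1}(X,\R)$ and $f$ is bounded, with $\lvert \Xi_{G,t}(f)(p)\rvert \le \Vert f\Vert_{\infty}\Vert \xi_{G,t}(p)\Vert_{1} = \Vert f\Vert_{\infty}$; in particular $\Xi_{G,t}(f)$ is a bounded real-valued function on $\Pro_{t}(\vN(G))$. Uniform continuity with respect to $d_{\tr,\vN(G)}$ follows from the estimate
\begin{displaymath}
	\lvert \Xi_{G,t}(f)(p) - \Xi_{G,t}(f)(q)\rvert \, \le \, \Vert f\Vert_{\infty}\,\Vert \xi_{G,t}(p) - \xi_{G,t}(q)\Vert_{1} \, \le \, \tfrac{2\Vert f\Vert_{\infty}}{\sqrt{t-t^{2}}}\,d_{\tr,\vN(G)}(p,q),
\end{displaymath}
using Lemma~\ref{lemma:map.3}, so $\Xi_{G,t}(f)$ is in fact Lipschitz, hence lies in $\UCB(\Pro_{t}(\vN(G)),d_{\tr,\vN(G)})$. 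Linearity of $\Xi_{G,t}$ in $f$ is immediate from absolute convergence and linearity of the sum; positivity holds because $\xi_{G,t}(p)(x) = \psi_{X}(\phi_{G,t}(p)\vert_{X})(x) \ge 0$ for all $x$, so $f \ge 0$ forces $\Xi_{G,t}(f) \ge 0$; and unitality follows from $\sum_{x\in X}\xi_{G,t}(p)(x) = \Vert \xi_{G,t}(p)\Vert_{1} = 1$ since $\xi_{G,t}(p) \in \Prob(X)$, giving $\Xi_{G,t}(\mathbf{1})(p) = 1$ for every $p$.

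For the equivariance identity, fix $f \in \ell^{\infty}(X,\R)$, $g \in G$, $p \in \Pro_{t}(\vN(G))$, and reindex the defining sum using the bijection $\gamma_{G}(g^{-1}) \in \Sym(X)$:
\begin{align*}
	\Xi_{G,t}(f\circ\gamma_{G}(g))(p) \, &= \, \sum\nolimits_{x\in X} f(\gamma_{G}(g)(x))\,\xi_{G,t}(p)(x) \\
	&= \, \sum\nolimits_{y\in X} f(y)\,\xi_{G,t}(p)(\gamma_{G}(g^{-1})(y)) \\
	&= \, \sum\nolimits_{y\in X} f(y)\,\bigl(\xi_{G,t}(p)\circ\gamma_{G}(g^{-1})\bigr)(y) \\
	&= \, \sum\nolimits_{y\in X} f(y)\,\xi_{G,t}(\lambda_{G}(g)p\lambda_{G}(g)^{\ast})(y) \, = \, \Xi_{G,t}(f)(\lambda_{G}(g)p\lambda_{G}(g)^{\ast}),
\end{align*}
where the substitution $y = \gamma_{G}(g)(x)$ (equivalently $x = \gamma_{G}(g^{-1})(y)$) is legitimate by absolute convergence, and the penultimate equality is exactly the equivariance clause of Lemma~\ref{lemma:map.3}. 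I do not anticipate a serious obstacle here: the statement is a bookkeeping lemma whose content is entirely packaged in Lemma~\ref{lemma:map.3}, and the only point requiring a word of care is justifying the reindexing of an infinite sum, which is handled by absolute summability of $f(\cdot)\xi_{G,t}(p)(\cdot)$.
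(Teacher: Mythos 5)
Your proposal is correct and follows essentially the same route as the paper's proof: boundedness and the Lipschitz estimate from Lemma~\ref{lemma:map.3} give well-definedness into $\UCB$, positivity and unitality come from $\xi_{G,t}$ taking values in $\Prob(X)$, and the equivariance identity is obtained by reindexing the absolutely convergent sum via $\gamma_{G}(g)$ and invoking the equivariance clause of Lemma~\ref{lemma:map.3}. No issues.
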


\begin{proof} In order to prove that $\Xi_{G,t}$ is well defined, consider any $f \in \ell^{\infty}(X,\R)$. Since $\xi_{G,t}(\Pro_{t}(\vN(G))) \subseteq \Prob(X)$ by Lemma~\ref{lemma:map.3}, it follows that \begin{displaymath}
	\sup\nolimits_{p \in \Pro_{t}(\vN(G))} \vert \Xi_{G,t}(f)(p) \vert \, \leq \, \sup\nolimits_{p \in \Pro_{t}(\vN(G))} \sum\nolimits_{x \in X} \vert f(x)\vert \xi_{G,t}(p)(x) \, \leq \, \Vert f \Vert_{\infty},
\end{displaymath} thus $\Xi_{G,t}(f) \in \ell^{\infty}(\Pro_{t}(\vN(G)),\R)$. For all $p,q \in \Pro_{t}(\vN(G))$, we see that \begin{align*}
	\vert \Xi_{G,t}(f)(p) - \Xi_{G,t}(f)(q) \vert \, &\leq \, \sum\nolimits_{x \in X} \vert f(x) \vert \cdot \vert \xi_{G,t}(p)(x) - \xi_{G,t}(q)(x) \vert \\
	& \leq \, \Vert f \Vert_{\infty} \Vert \xi_{G,t}(p) - \xi_{G,t}(q) \Vert_{1} \, \stackrel{\ref{lemma:map.3}}{\leq} \, \, \tfrac{2}{\sqrt{t-t^{2}}}\Vert f \Vert_{\infty} d_{\tr,\vN(G)}(p,q) .
\end{align*} Thus, $\Xi_{G,t}(f) \colon \Pro_{t}(\vN(G)) \to \R$ is $\tfrac{2}{\sqrt{t-t^{2}}}\Vert f \Vert_{\infty}$-Lipschitz with respect to~$d_{\tr,\vN(G)}$. In particular, $\Xi_{G,t}(f) \in \UCB(\Pro_{t}(\vN(G)),d_{\tr,\vN(G)})$. Hence, $\Xi_{G,t}$ is well defined. It is straightforward to check that $\Xi_{G,t}$ is linear. As $\xi_{G,t}(\Pro_{t}(\vN(G))) \subseteq \Prob(X)$ again by Lemma~\ref{lemma:map.3}, the operator $\Xi_{G,t}$ is moreover unital and positive. Finally, for all $f \in \ell^{\infty}(X,\R)$, $g \in G$ and $p \in \Pro_{t}(\vN(G))$, \begin{align*}
	\Xi_{G,t}(f \circ {\gamma_{G}(g)})(p) \, &= \, \sum\nolimits_{x \in X} f(\gamma_{G}(g)(x))\xi_{G,t}(p)(x) \\
	& = \, \sum\nolimits_{x \in X} f(x)\xi_{G,t}(p)\left(\gamma_{G}\left(g^{-1}\right)(x)\right) \\
	& \stackrel{\ref{lemma:map.3}}{=} \, \sum\nolimits_{x \in X} f(x)\xi_{G,t}(\lambda_{G}(g)p\lambda_{G}(g)^{\ast})(x) \\
	& = \, \Xi_{G,t}(f)(\lambda_{G}(g)p\lambda_{G}(g)^{\ast}) . \qedhere
\end{align*} \end{proof}

Recall that a group $G$ is said to be \emph{inner amenable}~\cite{effros} if either $\vert G \vert = 1$ or the action of $G$ on the (discrete) set $G\setminus \{ e\}$ given by conjugation is amenable, i.e., there exists a $\gamma_{G}(G)$-invariant mean on $\ell^{\infty}(G\setminus \{ e\},\R)$. Note that every non-inner amenable group is a non-trivial ICC group.

\begin{thm}\label{theorem:first} Let $G$ be a non-trivial ICC group and let $t \in (0,1)$. If the natural action of $G$ on $(\Pro_{t}(\vN(G)),d_{\tr,\vN(G)})$ is Eymard--Greenleaf amenable, then $G$ is inner amenable. \end{thm}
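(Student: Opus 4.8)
The plan is to transport a $G$-invariant mean from $\UCB(\Pro_{t}(\vN(G)),d_{\tr,\vN(G)})$ back to $\ell^{\infty}(G\setminus\{e\},\R)$ along the operator $\Xi_{G,t}$ supplied by Lemma~\ref{lemma:map.4}. First I would record that, since $G$ is a non-trivial ICC group, the set $X \defeq G\setminus\{e\}$ is non-empty (in fact infinite), so that inner amenability of $G$ is precisely the existence of a $\gamma_{G}(G)$-invariant mean on $\ell^{\infty}(X,\R)$.

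Using the hypothesis, fix a $G$-invariant mean $\mu \colon \UCB(\Pro_{t}(\vN(G)),d_{\tr,\vN(G)}) \to \R$ for the natural action $g \cdot p = \lambda_{G}(g)p\lambda_{G}(g)^{\ast}$; thus $\mu(h \circ \widetilde{g}) = \mu(h)$ for every $h$ in the domain and every $g \in G$, where $\widetilde{g}(p) = \lambda_{G}(g)p\lambda_{G}(g)^{\ast}$. Then set $\nu \defeq \mu \circ \Xi_{G,t} \colon \ell^{\infty}(X,\R) \to \R$. By Lemma~\ref{lemma:map.4}, the map $\Xi_{G,t}$ is a well-defined positive, unital, linear operator with values in $\UCB(\Pro_{t}(\vN(G)),d_{\tr,\vN(G)})$, so $\nu$ is a composition of positive unital linear maps and is therefore itself a positive unital linear functional, i.e., a mean on $\ell^{\infty}(X,\R)$.

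It then remains to check $\gamma_{G}(G)$-invariance of $\nu$. For $f \in \ell^{\infty}(X,\R)$ and $g \in G$, the intertwining identity in Lemma~\ref{lemma:map.4} reads, as an identity of functions on $\Pro_{t}(\vN(G))$, $\Xi_{G,t}(f \circ \gamma_{G}(g)) = \Xi_{G,t}(f) \circ \widetilde{g}$, whence
\[
	\nu(f \circ \gamma_{G}(g)) \, = \, \mu\bigl(\Xi_{G,t}(f) \circ \widetilde{g}\bigr) \, = \, \mu\bigl(\Xi_{G,t}(f)\bigr) \, = \, \nu(f),
\]
the middle equality being $G$-invariance of $\mu$. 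Hence $\nu$ is a $\gamma_{G}(G)$-invariant mean on $\ell^{\infty}(G\setminus\{e\},\R)$, and $G$ is inner amenable.

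The proof is essentially a bookkeeping assembly of the preceding lemmas; the only point requiring a little care is matching the left/right conventions in the definition of an invariant mean with the direction of the natural action and with the variance formula of Lemma~\ref{lemma:map.4}. No genuine obstacle arises beyond that, since all the analytic content---well-definedness of $\Xi_{G,t}$, its Lipschitz bounds, positivity and unitality, and the equivariance---has already been established.
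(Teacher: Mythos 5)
Your proposal is correct and follows essentially the same route as the paper's own proof: composing the invariant mean with the positive, unital, linear operator $\Xi_{G,t}$ from Lemma~\ref{lemma:map.4} and invoking its intertwining identity to obtain a $\gamma_{G}(G)$-invariant mean on $\ell^{\infty}(G\setminus\{e\},\R)$. Nothing is missing; the bookkeeping on the direction of the action matches the paper's conventions.
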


\begin{proof} In the light of Remark~\ref{remark:icc}(2), for each $g \in G$, we consider \begin{displaymath}
	\pi_{G}(g) \colon \, \Pro_{t}(\vN(G)) \, \longrightarrow \, \Pro_{t}(\vN(G)), \quad p \, \longmapsto \, \lambda_{G}(g)p\lambda_{G}(g)^{\ast} .
\end{displaymath} Suppose that the action of $G$ on $(\Pro_{t}(\vN(G)),d_{\tr,\vN(G)})$ is Eymard--Greenleaf amenable, i.e., there is a mean $\mu \colon \UCB(\Pro_{t}(\vN(G)),d_{\tr,\vN(G)}) \to \R$ such that \begin{equation}\tag{$\ast$}\label{invariant.mean}
	\forall g \in G \ \forall f \in \UCB(\Pro_{t}(\vN(G)),d_{\tr,\vN(G)}) \colon \quad \mu(f \circ {\pi_{G}(g)}) \, = \, \mu(f) .
\end{equation} Since $\Xi_{G,t}$ is a a positive, unital, linear operator by Lemma~\ref{lemma:map.4}, \begin{displaymath}
	\nu \defeq \mu \circ {\Xi_{G,t}} \colon \, \ell^{\infty}(G\setminus \{ e\},\R) \, \longrightarrow \, \R
\end{displaymath} constitutes a mean. Furthermore, for all $g \in G$ and $f \in \ell^{\infty}(G\setminus \{ e \},\R)$, \begin{displaymath}
	\nu(f \circ {\gamma_{G}(g)}) \, = \, \mu(\Xi_{G,t}(f \circ {\gamma_{G}(g)})) \, \stackrel{\ref{lemma:map.4}}{=} \, \mu(\Xi_{G,t}(f) \circ {\pi_{G}(g)}) \, \stackrel{\eqref{invariant.mean}}{=} \, \mu(\Xi_{G,t}(f)) \, = \, \nu(f) .
\end{displaymath} Thus, $G$ is inner amenable. \end{proof}

\begin{thm}\label{theorem:second} Let $G$ be a non-trivial ICC group and let $R \defeq \aff(\vN(G))$. If the topological group $\GL(R)_{\rk}$ is amenable, then $G$ is inner amenable. \end{thm}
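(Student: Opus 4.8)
The plan is to chain together the results already assembled in Sections~\ref{section:projections} and~\ref{section:groups}. Since $G$ is a non-trivial ICC group, Remark~\ref{remark:icc}(1) guarantees that $M \defeq \vN(G)$ is a von Neumann factor of type $\mathrm{II}_{1}$, so that $R = \aff(M)$ is a non-discrete irreducible, continuous ring and the entire mechanism of Lemma~\ref{lemma:amenable} is available for it.

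First I would fix any $t \in (0,1)$, say $t = \tfrac{1}{2}$. Assuming that $\GL(R)_{\rk}$ is amenable, Lemma~\ref{lemma:amenable} (the implication $(1)\Longrightarrow(4)$) gives that the action $\U(M) \curvearrowright (\Pro_{t}(M),d_{\tr,M})$ is Eymard--Greenleaf amenable; let $\mu$ be a $\U(M)$-invariant mean on $\UCB(\Pro_{t}(M),d_{\tr,M})$.

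Next I would observe that the natural action of $G$ on $(\Pro_{t}(\vN(G)),d_{\tr,\vN(G)})$ introduced in Remark~\ref{remark:icc}(2) is obtained from the $\U(M)$-action by restriction along the homomorphism $G \to \U(M),\, g \mapsto \lambda_{G}(g)$. Consequently $\mu$, being invariant under all of $\U(M)$, is in particular invariant under the subgroup $\lambda_{G}(G)$, so the natural action of $G$ on $(\Pro_{t}(\vN(G)),d_{\tr,\vN(G)})$ is itself Eymard--Greenleaf amenable. Applying Theorem~\ref{theorem:first} then yields that $G$ is inner amenable, which completes the argument.

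Since each step is a direct invocation of an already-established fact, there is essentially no obstacle here; the only point requiring a moment's reflection is the elementary observation that an invariant mean for a group action restricts to an invariant mean for the action of any subgroup. All the substantive work has been carried out beforehand, in Lemmas~\ref{lemma:isometric} and~\ref{lemma:amenable}, in the chain of Lemmas~\ref{lemma:map}--\ref{lemma:map.4}, and in Theorem~\ref{theorem:first}.
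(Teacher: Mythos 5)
Your proposal is correct and coincides with the paper's own proof: both fix $t=\tfrac{1}{2}$, invoke Lemma~\ref{lemma:amenable} to obtain Eymard--Greenleaf amenability of the $\U(\vN(G))$-action on $(\Pro_{1/2}(\vN(G)),d_{\tr,\vN(G)})$, restrict along $g \mapsto \lambda_{G}(g)$ to the natural $G$-action, and conclude via Theorem~\ref{theorem:first}. Your explicit remark that an invariant mean for a group action restricts to one for any subgroup is exactly the (implicit) step the paper uses.
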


\begin{proof} Suppose that $\GL(R)_{\rk}$ is amenable. Then, by Lemma~\ref{lemma:amenable}, the action of $\U(\vN(G))$ on $(\Pro_{1/2}(\vN(G)),d_{\tr,\vN(G)})$ is Eymard--Greenleaf amenable, whence the natural action of $G$ on $(\Pro_{1/2}(\vN(G)),d_{\tr,\vN(G)})$ is Eymard--Greenleaf amenable, too. Hence, $G$ is inner amenable by Theorem~\ref{theorem:first}. \end{proof}

\begin{cor}\label{corollary:main} Let $G$ be a group that is not inner amenable. Then $\aff(\vN(G))$ is a non-discrete irreducible, continuous ring whose unit group is non-amenable with respect to the rank topology. \end{cor}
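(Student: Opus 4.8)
The plan is to assemble Corollary~\ref{corollary:main} directly from the structural results already in place, using the hypothesis in its contrapositive form. First I would recall that a non-inner-amenable group $G$ is, by the remark preceding Theorem~\ref{theorem:first}, automatically a non-trivial ICC group; hence $\vN(G)$ is a $\mathrm{II}_{1}$ factor by Remark~\ref{remark:icc}(1). This is the observation that unlocks everything else, since all the machinery of Section~\ref{section:projections}---in particular Proposition~\ref{proposition:affiliated}, Remark~\ref{remark:affiliated}, and Lemma~\ref{lemma:amenable}---is stated for $\mathrm{II}_{1}$ factors.

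Next I would invoke Remark~\ref{remark:affiliated}: applied to $M \defeq \vN(G)$, it gives that $R \defeq \aff(\vN(G))$ is an irreducible (part~(1)) continuous (Proposition~\ref{proposition:affiliated}) ring which is moreover non-discrete (part~(3), since $\rk_{R}(R) = [0,1]$). That settles the first assertion of the corollary. For the second assertion I would argue by contraposition: suppose $\GL(R)_{\rk}$ were amenable. Then Theorem~\ref{theorem:second}, applied to the non-trivial ICC group $G$ and to $R = \aff(\vN(G))$, would force $G$ to be inner amenable, contradicting the hypothesis. Therefore $\GL(R)_{\rk}$ is non-amenable, i.e., the unit group of $\aff(\vN(G))$ is non-amenable with respect to the rank topology.

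I do not expect any genuine obstacle here: the corollary is a formal consequence of Theorem~\ref{theorem:second} together with Remark~\ref{remark:affiliated}, the only thing to be careful about being the bookkeeping remark that ``not inner amenable'' already entails ``non-trivial ICC'', so that the blanket ICC hypothesis of Theorem~\ref{theorem:second} is met. If one wanted to be slightly more self-contained, one could unwind Theorem~\ref{theorem:second} to its sources---Lemma~\ref{lemma:amenable} ($(1)\Rightarrow(4)$ with $t = 1/2$) followed by Theorem~\ref{theorem:first}---but since those are already packaged in Theorem~\ref{theorem:second} the cleanest exposition simply cites it. The proof is thus essentially two lines.

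\begin{proof} Since $G$ is not inner amenable, $G$ is a non-trivial ICC group, so $R \defeq \aff(\vN(G))$ is a non-discrete irreducible, continuous ring by Remark~\ref{remark:affiliated}. If $\GL(R)_{\rk}$ were amenable, then $G$ would be inner amenable by Theorem~\ref{theorem:second}, a contradiction. Hence $\GL(R)_{\rk}$ is non-amenable. \end{proof}
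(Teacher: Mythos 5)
Your proposal is correct and follows essentially the same route as the paper: reduce ``not inner amenable'' to ``non-trivial ICC'', invoke Remark~\ref{remark:icc}(1), Proposition~\ref{proposition:affiliated} and Remark~\ref{remark:affiliated}(1)$+$(3) to identify $\aff(\vN(G))$ as a non-discrete irreducible, continuous ring, and then apply Theorem~\ref{theorem:second} in contrapositive form. No gaps.
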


\begin{proof} Not being inner amenable, $G$ must be a non-trivial ICC group. Thus, Remark~\ref{remark:icc}(1), Proposition~\ref{proposition:affiliated} and Remark~\ref{remark:affiliated}(1)$+$(3) together assert that $R \defeq \aff(\vN(G))$ is a non-discrete irreducible, continuous ring. According to Theorem~\ref{theorem:second}, the topological group $\GL(R)_{\rk}$ is non-amenable. \end{proof}

For the sake of completeness, we mention two prominent results negating inner amenability for certain concrete groups, thus providing specific examples of continuous rings such as in Corollary~\ref{corollary:main}.

\begin{prop}[Effros~\cite{effros}]\label{proposition:robin} Let $X$ be a set with $\vert X \vert > 1$. Then the free group $\free (X)$ is not inner amenable. \end{prop}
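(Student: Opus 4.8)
The plan is to argue by contradiction, exploiting the normal-form calculus of the free group; this is in essence Effros's original argument. Write $G = \free(X)$, fix two distinct letters $a,b \in X$, and suppose towards a contradiction that $G$ is inner amenable. Since $|X| > 1$, the group $G$ is non-trivial, so there is a $\gamma_{G}(G)$-invariant mean $\mu$ on $\ell^{\infty}(G\setminus\{e\},\R)$; for a subset $A \subseteq G\setminus\{e\}$ I write $\mu(A) \defeq \mu(\mathbf{1}_{A})$, so that $\mu$ is monotone and finitely additive with $\mu(G\setminus\{e\}) = 1$, and $\mu(gAg^{-1}) = \mu(A)$ for every $g \in G$.

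The next step is to sort the nonzero elements of $G$ by the first letter of their reduced word. Let $U_{a}$ (resp.\ $U_{b}$) be the set of those $g \in G\setminus\{e\}$ whose reduced form begins with $a$ or $a^{-1}$ (resp.\ with $b$ or $b^{-1}$), let $U_{c} \defeq (G\setminus\{e\})\setminus(U_{a}\cup U_{b})$, and refine $U_{a} = U_{a}^{+}\sqcup U_{a}^{-}$ and $U_{b} = U_{b}^{+}\sqcup U_{b}^{-}$ according to the sign of the leading letter. The only elementary fact needed is that, whenever $g \in G\setminus\{e\}$ has reduced form not beginning with $a^{-1}$, the reduced form of $aga^{-1}$ begins with $a$: prepending $a$ causes no cancellation, while appending $a^{-1}$ can cancel at most the final letter and so never reaches the leading $a$. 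Applying this, together with its mirror image under $a \leftrightarrow a^{-1}$, $b \leftrightarrow b^{-1}$, yields the inclusions $a U_{b} a^{-1} \subseteq U_{a}^{+}$, $a^{-1} U_{b} a \subseteq U_{a}^{-}$, $a U_{c} a^{-1} \subseteq U_{a}^{+}$, and, symmetrically, $b U_{a} b^{-1} \subseteq U_{b}^{+}$ and $b^{-1} U_{a} b \subseteq U_{b}^{-}$.

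Feeding these inclusions into $\mu$ then finishes the argument. By conjugation-invariance and monotonicity, $\mu(U_{a}^{+}) \geq \mu(U_{b})$ and $\mu(U_{a}^{-}) \geq \mu(U_{b})$, hence $\mu(U_{a}) \geq 2\mu(U_{b})$; by symmetry $\mu(U_{b}) \geq 2\mu(U_{a})$, so $\mu(U_{a}) \geq 4\mu(U_{a})$ and therefore $\mu(U_{a}) = \mu(U_{b}) = 0$. Consequently $\mu(U_{c}) = \mu(aU_{c}a^{-1}) \leq \mu(U_{a}^{+}) \leq \mu(U_{a}) = 0$ as well, whence $1 = \mu(G\setminus\{e\}) = \mu(U_{a}) + \mu(U_{b}) + \mu(U_{c}) = 0$ --- a contradiction. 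No step here is genuinely difficult; the only point that demands care is the bookkeeping in the normal-form claim, namely tracking when the trailing $a^{-1}$ triggers a cancellation and checking that the leading letter always survives, and this is entirely routine.
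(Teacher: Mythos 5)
Your argument is correct, but it is not the route the paper takes. The paper's proof is a two-line reduction to a general criterion: every centralizer $\Cent_{\free(X)}(g)$ of a non-trivial element is cyclic (via Nielsen--Schreier), hence amenable, while $\free(X)$ itself is non-amenable, and a result of Rosenblatt (as packaged in Haagerup--Olesen) says that a non-amenable group all of whose non-trivial centralizers are amenable cannot be inner amenable. Your proof instead exhibits an explicit paradoxical decomposition of $G\setminus\{e\}$ for the conjugation action: the normal-form claim and the resulting inclusions $aU_{b}a^{-1}\subseteq U_{a}^{+}$, $a^{-1}U_{b}a\subseteq U_{a}^{-}$, $aU_{c}a^{-1}\subseteq U_{a}^{+}$, $b^{\pm 1}U_{a}b^{\mp 1}\subseteq U_{b}^{\pm}$ all check out, and the mean computation $\mu(U_{a})\geq 2\mu(U_{b})\geq 4\mu(U_{a})$ forcing $\mu(U_{a})=\mu(U_{b})=\mu(U_{c})=0$ is sound. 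The trade-off: your version is entirely self-contained and elementary, using nothing beyond reduced words and finite additivity, whereas the paper's version imports two external results (plus non-amenability of $\free(X)$, which itself needs an argument) but generalizes far beyond free groups --- it is exactly the template used for Thompson's groups $T$ and $V$ in Theorem~\ref{theorem:haagerup.olesen}. One small historical quibble: this combinatorial argument is not really ``Effros's original argument,'' which went through property $\Gamma$ and Murray--von Neumann's work on free group factors; the paper explicitly presents the centralizer argument as a simplification of that.
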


\begin{proof}\!\!\!\footnote{This argument, which is simpler than the original one from~\cite{effros} (based on~\cite{MurrayVonNeumannIV}), was kindly pointed out to the author by Robin Tucker-Drob.} For each $g \in \free (X)\setminus \{ e \}$, the centralizer $\Cent_{\free (X)}(g) = \{ h \in \free (X) \mid gh=hg\}$ is cyclic\footnote{By the Nielsen--Schreier theorem, the subgroup $\Cent_{\free (X)}(g) \leq \free (X)$ is free, i.e., $\Cent_{\free (X)}(g) \cong \free (Y)$ for some set $Y$. Since the center of $\Cent_{\free (X)}(g)$ contains the non-trivial element $g$, we conclude that $\vert Y \vert = 1$. Hence, $\Cent_{\free (X)}(g) \cong \free (Y) \cong \Z$ is cyclic.}, thus amenable. Since the (discrete) group $\free (X)$ is non-amenable, this implies by~\cite[Cor.~4.3]{HaagerupOlesen} (which is a consequence of a result due to Rosenblatt~\cite[Prop.~3.5]{Rosenblatt81}) that $\free (X)$ is not inner amenable. \end{proof}

The proof of the following result in~\cite{HaagerupOlesen} has the same global structure as the one above, but requires a much more delicate analysis of centralizers.

\begin{thm}[Haagerup \& Olesen~\cite{HaagerupOlesen}]\label{theorem:haagerup.olesen} The Thompson groups $T$ and $V$ are not inner amenable. \end{thm}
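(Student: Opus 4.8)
The plan is to reproduce, with $T$ and $V$ in place of the free group, the scheme of the proof of Proposition~\ref{proposition:robin}. That scheme has two ingredients --- non-amenability of the group, and control of its centralizers --- combined via~\cite[Cor.~4.3]{HaagerupOlesen} (whose underlying mechanism is Rosenblatt~\cite[Prop.~3.5]{Rosenblatt81}), or rather via a strengthening of it, because, unlike in a free group, a non-trivial element of $T$ or $V$ can have a very large centralizer. Non-amenability itself is immediate: $T$ contains a non-abelian free subgroup --- for instance a ping-pong pair of circle homeomorphisms, or a copy of $\mathrm{PSL}_{2}(\Z)$ --- and $T \leq V$, so both groups are non-amenable; a ping-pong argument on the Cantor set also produces a non-abelian free subgroup of $V$ directly. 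The whole difficulty is thus concentrated in the centralizers.

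For the centralizer analysis one realises $V$ as a group of homeomorphisms of the Cantor set $\{0,1\}^{\N}$ and $T$ as a group of homeomorphisms of the circle $\mathbb{S}^{1}$, and exploits the fact that $\Cent_{G}(g)$ must preserve all the $g$-invariant data of $g$: its support, its fixed-point set, and --- read off from a revealing pair in the case of $V$, and from the cyclically ordered fixed-point set together with the rotation number in the case of $T$ --- the decomposition of the space into the regions where $g$ acts as a basic attractor/repeller, as a periodic block, or as the identity. Finite orbits of $g$ contribute finite symmetric groups and periodic blocks contribute abelian pieces; the identity part is the delicate one, since a clopen set (for $V$), resp.\ an arc (for $T$), fixed pointwise by $g$ carries a whole copy of $V$, resp.\ of Thompson's group $F$, inside $\Cent_{V}(g)$, resp.\ $\Cent_{T}(g)$. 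One therefore restricts to elements $g$ whose fixed-point set has empty interior --- e.g.\ elements of $T$ with finite fixed-point set --- shows that for such $g$ the centralizer is amenable (being assembled from cyclic and finite pieces), and arranges that enough of these elements are available, generating a non-amenable subgroup, for the strengthened criterion to apply.

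The main obstacle is exactly this centralizer computation, together with the bookkeeping needed to make the criterion of~\cite{HaagerupOlesen} applicable. In a free group the centralizer of every non-trivial element is cyclic, so the criterion applies to the whole group without effort; in $T$ and $V$ a non-trivial element can have a large centralizer --- a single fixed clopen set, or a single fixed arc, already forces this --- so one must both isolate the ``dynamically rigid'' elements with amenable centralizer and verify that they are plentiful enough. This analysis, carried out via the dynamics of prefix-exchange maps on the Cantor set and of piecewise-linear circle homeomorphisms, is the technical heart of~\cite{HaagerupOlesen}.
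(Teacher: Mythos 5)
The paper does not prove this theorem: it is imported verbatim from Haagerup--Olesen, and the only thing the text adds is the remark that their argument has the same global structure as the proof of Proposition~\ref{proposition:robin} (non-amenability plus a Rosenblatt-type centralizer criterion) but with a much more delicate centralizer analysis. Your outline is exactly that, and it is an accurate roadmap of the cited proof: non-amenability via free subgroups is indeed trivial, the criterion of \cite{HaagerupOlesen} is already stated in the subgroup form you need (a non-amenable subgroup $H\leq G$ with $\Cent_{G}(h)$ amenable for all $h\in H\setminus\{e\}$, so no further ``strengthening'' is required), and the real content is showing that elements of $T$, resp.\ $V$, whose fixed-point set has empty interior have virtually abelian centralizers and that such elements generate a non-amenable subgroup. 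Since you defer those two computations to \cite{HaagerupOlesen} --- as does the paper itself --- your proposal should be read as a correct description of the cited argument rather than a self-contained proof; within the conventions of this paper that is exactly the intended status of the statement.
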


\begin{remark} While the work of Carderi and Thom~\cite{CarderiThom} provides examples of non-discrete irreducible, continuous rings $R$ such that $\GL(R)_{\rk}$ is extremely amenable, our Theorem~\ref{theorem:second} exhibits instances of non-discrete irreducible, continuous rings $R$ such that $\GL(R)_{\rk}$ is non-amenable. In view of the different constructions of continuous rings employed in~\cite{CarderiThom} and the present note, it would be interesting to know \begin{itemize}
	\item[$(1)$] whether $\GL(\Mat_{\infty}(\Q))_{\rk}$ is amenable, and
	\item[$(2)$] whether $\GL(\aff(M))_{\rk}$ is amenable for some $\mathrm{II}_{1}$ factor $M$.
\end{itemize} \end{remark}

\section*{Acknowledgments}

The author is grateful to Andreas Thom, Maxime Gheysens, and Robin Tucker-Drob for their comments on earlier versions of this note. Moreover, the author would like to thank Robin Tucker-Drob for having pointed out the simple proof of Proposition~\ref{proposition:robin}. Finally, the author would like to express his sincere gratitude towards the anonymous referee for a number of insightful remarks and inspiring questions.


\end{document}